\newtheorem{proposition}{Proposition}
\newtheorem{lemma}{Lemma}[section]
\newtheorem{sublemma}{Sublemma}[section]
\newtheorem{theorem}{Theorem}
\newtheorem{corollary}{Corollary}
\theoremstyle{definition}
\newtheorem{definition}{Definition}[section]
\theoremstyle{definition}
\theoremstyle{definition}
\newtheorem{remark}{Remark}[section]
\newcommand{\R}{\mathbb{R}}
\begin{document}
\title[
Simultaneous {smoothness and simultaneous} stability
]
{Simultaneous {smoothness and simultaneous} stability 
\\ 
of a $C^\infty$ {strictly} convex integrand and its dual}
\author[E.B.~Batista]{Erica Boizan Batista}
\address{
{\color{black}Center of Science and Technology, Federal
University of Cariri, 63048-080, Juazeiro do Norte -- CE, Brazil.
}
}
\email{{\color{black} erica.batista@ufca.edu.br}}
\author[H.~Han]{Huhe Han}
\address{
{\color{black}Research Institute of Environment and Information Sciences},  
Yokohama National University, 
Yokohama 240-8501, Japan
}
\email{han-huhe-bx@ynu.jp}
\author[T.~Nishimura]{Takashi Nishimura
}
\address{
Research Institute of Environment and Information Sciences,  
Yokohama National University, 
Yokohama 240-8501, Japan}
\email{nishimura-takashi-yx@ynu.jp}

\begin{abstract}
{
\color{black}In this paper, 
we investigate   
simultaneous properties of a convex integrand $\gamma$ and its dual $\delta$.   The main results are the following three.   
}
\begin{enumerate} 
\item[(1)]\quad 
For a $C^\infty$ convex integrand $\gamma: S^n\to \mathbb{R}_+$, 
its dual convex integrand $\delta: S^n\to \mathbb{R}_+$ 
is of class $C^\infty$ {\color{black}if and only if $\gamma$ is a strictly 
convex integrand}.   
\item[(2)]\quad 
{\color{black}Let $\gamma: S^n\to \mathbb{R}_+$ be a 
$C^\infty$ strictly convex integrand. Then,     
$\gamma$ is stable if and only if}   
its dual convex integrand $\delta: S^n\to \mathbb{R}_+$ 
is stable.   
\item[(3)]\quad 
{\color{black}Let $\gamma: S^n\to \mathbb{R}_+$ be a 
$C^\infty$ strictly convex integrand. 
Suppose that $\gamma$ is stable.}    
Then, for any $i$ $(0\le i\le n)$, 
{\color{black}a point} 
$\theta_0\in S^n$ is a non-degenerate critical point of $\gamma$ 
with Morse index $i$ if and only if 
{\color{black}its antipodal point} $-\theta_0\in S^n$ is 
a non-degenerate critical point of the dual convex integrand ${\delta}$ 
with Morse index $(n-i)$.     
\end{enumerate}
  \end{abstract}

\subjclass[2010]{52A05, 52A55, 58K05, 58K30}

\keywords{$C^\infty$ convex integrand, 
{\color{black}$C^\infty$ strictly convex integrand}, 
Dual convex integrand, 
{\color{black}Stable function}, 
Morse index, Wulff shape, Dual Wulff shape, 
Spherical pedal, Spherical caustic, Spherical wave front, 
Spherical symmetry set}



\date{}

\maketitle

\section{Introduction}
Throughout this paper, we let $n$ and $\mathbb{R}_+$ be a positive integer 
and the set consisting of positive real numbers 
respectively.     
Let inv$:\R^{n+1}-\{{\bf 0}\}\to\R^{n+1}-\{{\bf 0}\}$ be the inversion 
with respect to the origin ${\bf 0}$ of $\R^{n+1}$, namely, 
inv$:\R^{n+1}-\{{\bf 0}\}\to\R^{n+1}-\{{\bf 0}\}$ is defined as follows
 where $(\theta, r)$ means the polar plot expression 
for a point of $\R^{n+1}-\{{\bf 0}\}$:
\[
\mbox{\rm inv}(\theta,r)=\left(-\theta,\dfrac{1}{r}\right).    
\]
Let $S^n$ be the unit sphere of $\mathbb{R}^{n+1}$.    
For a continuous function 
$\gamma: S^n\to \mathbb{R}_+$,    
denote the boundary of 
the convex hull of inv(graph$(\gamma)$) by $\Gamma_\gamma$, 
where graph$(\gamma)$ is the subset of $\mathbb{R}^{n+1}-\{{\bf 0}\}$ 
defined as follows.  
\[
\mbox{\rm graph}(\gamma)=
\left\{(\theta,\gamma(\theta))\in \mathbb{R}^{n+1}-\{{\bf 0}\}\; 
\left.\right|\; \theta\in S^n\right\}.  
\]
A continuous function $\gamma: S^n\to \mathbb{R}_+$ is said to be a  
{\it convex integrand} if the equality 
$\Gamma_\gamma=$inv(graph$(\gamma)$) is satisfied 
{\color{black}(\cite{taylor})}.   
{\color{black}
\begin{definition}\label{strictly convex integrand}
{\rm 
A convex integrand $\gamma: S^n\to \mathbb{R}_+$ is 
{\color{black}called} a {\it strictly convex integrand} if 
the convex hull of inv(graph$(\gamma)$) is strictly convex.    
}
\end{definition}
}
Define $C^\infty(S^n, \mathbb{R}_+)$ and 
$C^\infty_{\rm conv}(S^n, \mathbb{R}_+)$ as follows.    
\begin{eqnarray*}
C^\infty(S^n, \mathbb{R}_+) & = & 
\left\{\gamma: S^n\to \mathbb{R}_+ \;\; C^\infty\right\},  \\ 
C^\infty_{\rm conv}(S^n, \mathbb{R}_+) & = & 
\left\{\gamma\in C^\infty(S^n, \mathbb{R}_+)\; |\; \gamma 
\mbox{ is a convex integrand}\right\}.   
\end{eqnarray*}  
The set $C^\infty(S^n, \mathbb{R}_+)$ is a topological space endowed with 
Whitney $C^\infty$ topology 
(for details on Whitney $C^\infty$ topology, for instance 
see \cite{arnold-guseinzade-varchenko, brocker, g-g, narasimhan}); and 
the set $C^\infty_{\rm conv}(S^n, \mathbb{R}_+)$ 
is a topological subspace of 
$C^\infty(S^n, \mathbb{R}_+)$.     
Given a $\gamma\in C^\infty_{{\rm conv}}(S^n, \mathbb{R}_+)$, 
the {\it Wulff shape associated with $\gamma$}, 
denoted by $\mathcal{W}_\gamma$, is the following intersection: 
\[
\bigcap_{\theta\in S^n}\left\{
\left.x\in \mathbb{R}^{n+1}\; \right|\; x\cdot \theta\le \gamma(\theta)  
\right\}, 
\]       
where $x\cdot \theta$ stands for 
the standard scalar product of two vectors $x$ and 
$\theta$ of $\mathbb{R}^{n+1}$.    
The notion of Wulff shape was firstly introduced 
by G. Wulff \cite{wulff} in 1901 
as a geometric model of a crystal at equilibrium.     
For details on Wulff shapes, see for instance 
\cite{giga, morgan, crystalbook, taylor, taylor2}.   
By definition, {\color{black}any} Wulff shape $\mathcal{W}_\gamma$ is compact, 
convex and contains the origin of $\R^{n+1}$ as an interior point.     
{\color{black}
In order to investigate Wulff shapes, the notion of convex integrand was introduced (see \cite{taylor}).    
}   
{\color{black} 
\begin{definition}\label{definition 1.2}   
{\rm 
Let $\gamma: S^n\to \mathbb{R}_+$ be a convex integrand.   
\begin{enumerate}
\item[(1)]\quad A convex integrand 
$\delta: S^n\to \mathbb{R}_+$ is called 
the {\it dual convex integrand of $\gamma$} or just the {\it dual} 
of $\gamma$ 
if the equality 
inv(graph$(\delta))=\partial \mathcal{W}_\gamma$ holds, 
where $\partial \mathcal{W}_\gamma$ stands for the boundary of 
$\mathcal{W}_\gamma$.     
\item[(2)]\quad The Wulff shape associated with $\delta$ 
is called the {\it dual Wulff shape of} $\mathcal{W}_\gamma$ 
and is denoted by $\mathcal{DW}_\gamma$.  
\[
\mathcal{DW}_\gamma=\mathcal{W}_\delta. 
\]
\end{enumerate}
}
\end{definition}
}
\noindent 
Notice that both of the above two dual notions are involutive, namely, 
we have that the dual convex integrand of $\delta$ is $\gamma$ and 
the equality $\mathcal{DDW}_\gamma=\mathcal{W}_\gamma$ 
holds {\color{black}
(see Lemma \ref{lemma 2.7} in Subsection \ref{subsection 2.6})}.     
{
\color{black}In this paper, it is focused exclusively on investigating  
simultaneous properties of a convex integrand 
$\gamma$ and its dual $\delta$. 
}  
\par 
\medskip 
{\color{black}
\begin{definition}\label{stable}
{\rm 
A $C^\infty$ function $\gamma\in C^\infty(S^n, \mathbb{R}_+)$ 
is said to be 
{\it stable} if the $\mathcal{A}$-equivalence class of $\gamma$ is open, 
where 
two elements $\gamma_1, \gamma_2\in C^\infty(S^n, \mathbb{R}_+)$ are 
said to be $\mathcal{A}$-{\it equivalent} 
if there exist $C^\infty$ diffeomorphisms 
$h: S^n\to S^n$ and $H: \mathbb{R}_+\to \mathbb{R}_+$ satisfying  
$\gamma_1=H\circ \gamma_2\circ h^{-1}$.   
}
\end{definition}
}
\noindent 
{\color{black} 
There are two reasons why we 
prefer stable convex integrands.        
It is well-known that a convex integrand represents surface energy density.     
And, usually, it is almost impossible to obtain the 
precise surface energy density, that is to say, 
we merely have an approximated surface energy density.   
Therefore, we would like to have a situation that the space consisting of 
$C^\infty$ convex integrands having only non-degenerate critical points 
is dense in the space consisting of $C^\infty$ convex integrands.   
This is one reason.      
Another reason is as follows.  
The Morse inequalities (\cite{morse theory}) are  
almost indispensable tools for investigating 
convex integrands globally from the 
differentiable viewpoint.     
In order to apply the Morse inequalities, critical points must be 
non-degenerate.        
}
\par 
In \cite{batistahannishimura}, it 
{
\color{black}
has been shown that 
}
stable convex integrands form an open and dense subset of 
$C^\infty_{\rm conv}(S^n, \mathbb{R}_+)$.       
As the next step of \cite{batistahannishimura}, 
it is natural to investigate {\color{black}when and} how 
{\color{black}the simultaneous stability of} the convex integrand 
$\gamma$ {\color{black}and its dual $\delta$ occurs}, 
which is the main purpose of this paper.   
{\color{black}
Since stable functions must be of class $C^\infty$, 
before carrying out the main purpose, it is necessary to 
show the following:}    
\begin{theorem}\label{theorem 1}
Let $\gamma: S^n\to \mathbb{R}_+$ be a $C^\infty$ convex integrand 
and let $\delta$ be the dual convex integrand of $\gamma$.     
Then, $\delta$ is of class $C^\infty$ {\color{black}if and only if 
$\gamma$ is a strictly convex integrand}.   
\end{theorem}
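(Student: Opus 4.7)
My plan is to reduce Theorem~\ref{theorem 1} to a smoothness statement about the boundary of the Wulff shape $\mathcal{W}_\gamma$. Writing $\mathrm{inv}$ in polar form, the defining identity $\mathrm{inv}(\mathrm{graph}(\delta))=\partial\mathcal{W}_\gamma$ yields $\delta(\theta)=1/\rho_{\mathcal{W}_\gamma}(-\theta)$, where $\rho_{\mathcal{W}_\gamma}:S^n\to\mathbb{R}_+$ is the radial function of $\mathcal{W}_\gamma$. Since $\delta$ and $\rho_{\mathcal{W}_\gamma}$ are both positive, $\delta\in C^\infty(S^n,\mathbb{R}_+)$ is equivalent to $\rho_{\mathcal{W}_\gamma}\in C^\infty(S^n,\mathbb{R}_+)$, which is in turn equivalent to $\partial\mathcal{W}_\gamma$ being a $C^\infty$ hypersurface of $\mathbb{R}^{n+1}$.

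Next, using that $\gamma$ is the support function of $\mathcal{W}_\gamma$, I would work with the $C^\infty$ map
\[
\Phi\colon S^n\to\mathbb{R}^{n+1},\qquad \Phi(\theta)=\gamma(\theta)\,\theta+\nabla_{S^n}\gamma(\theta),
\]
which assigns to each $\theta$ the boundary point of $\mathcal{W}_\gamma$ touched by the supporting hyperplane with outer normal $\theta$. One checks that $\Phi(S^n)=\partial\mathcal{W}_\gamma$, and that $d\Phi_\theta$, viewed as a symmetric endomorphism of $T_\theta S^n$, equals $\mathrm{Hess}_{S^n}\gamma(\theta)+\gamma(\theta)\,\mathrm{Id}$, which is positive semidefinite because $\gamma$ is a convex integrand. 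The theorem thus reduces to showing that $\Phi$ is a $C^\infty$ embedding precisely when $\gamma$ is a strictly convex integrand.

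For the ``if'' direction, assume $K_\gamma:=\mathrm{conv}(\mathrm{inv}(\mathrm{graph}(\gamma)))$ is strictly convex. Polar duality between $K_\gamma$ and $-\mathcal{W}_\gamma$ then implies that the outer normal of $\mathcal{W}_\gamma$ is unique at every boundary point, so $\Phi$ is a continuous bijection onto $\partial\mathcal{W}_\gamma$. I would upgrade this to a diffeomorphism by proving $\mathrm{Hess}_{S^n}\gamma+\gamma\,\mathrm{Id}$ is positive definite everywhere: a zero eigenvalue at some $\theta_0$ would collapse a direction of $T_{\theta_0}S^n$ under $d\Phi$, and tracing this degeneracy through the envelope of supporting hyperplanes and the inversion $\mathrm{inv}$ should produce a non-extremal point on $\partial K_\gamma=\mathrm{inv}(\mathrm{graph}(\gamma))$, contradicting the strict convexity of $K_\gamma$. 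For the ``only if'' direction I would argue contrapositively: a line segment in $\partial K_\gamma$ corresponds, via polar duality, to a corner of $\partial\mathcal{W}_\gamma$, at which $\rho_{\mathcal{W}_\gamma}$ fails to be $C^1$, so $\delta\notin C^\infty$.

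The principal obstacle is the bootstrap in the ``if'' direction: standard polar duality on its own yields only $C^1$-regularity of $\partial\mathcal{W}_\gamma$ from strict convexity of $K_\gamma$, whereas what I need is $C^\infty$-regularity, i.e., positive definiteness of $\mathrm{Hess}_{S^n}\gamma+\gamma\,\mathrm{Id}$ at every point. This upgrade must make essential use of both hypotheses $\gamma\in C^\infty$ and strict convexity of $K_\gamma$, through a careful local analysis tracking how a potential degeneracy of the Hessian at a single $\theta_0\in S^n$ would propagate through the inversion to a geometric anomaly of the dual hypersurface $\partial K_\gamma$.
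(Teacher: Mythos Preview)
Your overall framework is sound and your ``only if'' direction matches the paper's: a segment in $\partial K_\gamma$ forces a corner on $\partial\mathcal{W}_\gamma$, so $\delta$ cannot be $C^1$; the paper phrases this via the equivalence from \cite{hannishimura2}. Your route for the ``if'' direction, however, diverges from the paper's and contains a genuine gap at exactly the point you flag as the principal obstacle.

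The gap is this: your proposed mechanism ``zero eigenvalue of $A(\theta_0)=\mathrm{Hess}_{S^n}\gamma(\theta_0)+\gamma(\theta_0)\,\mathrm{Id}$ $\Rightarrow$ non-extremal point on $\partial K_\gamma$'' does not hold. Under polar duality, a kernel vector of $A(\theta_0)$ corresponds to a direction in which $\partial K_\gamma$ has \emph{vanishing curvature} at the dual point, not to a line segment. Strict convexity (in the sense of Definition~\ref{strictly convex integrand}: every boundary point is extreme) permits isolated curvature zeros. Concretely, on $S^1$ take $\gamma(\alpha)=1+\tfrac{1}{3}\cos 2\alpha$: then $\gamma''+\gamma=1-\cos 2\alpha\ge 0$ with equality only at $\alpha=0,\pi$, so $A$ degenerates there; yet the curvature of $\partial K_\gamma$ is proportional to $(\gamma''+\gamma)/\gamma^3$, vanishing only at those two isolated parameters, so $K_\gamma$ contains no segment and is strictly convex. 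Your contradiction never materialises. Thus ``strict convexity $+$ $C^\infty$'' alone does not yield positive definiteness of $A$ by the route you sketch, and your bootstrap from $C^1$ to $C^\infty$ is not justified.

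By contrast, the paper does not stay in $\mathbb{R}^{n+1}$ at all: it lifts everything to $S^{n+1}$ via the central projection $\alpha_N$, replaces your inverse Gauss map $\Phi$ by the spherical dual $\widetilde{\Phi}=D\widetilde{\Psi}_{\widehat{\gamma}}$ (the time-$\pi/2$ spherical wave front of the embedded hypersurface $\partial\widetilde{\mathcal W}_\delta$), and exploits that $\widetilde{\Phi}$ is \emph{Legendrian}: its contact lift $(\widetilde{\Phi},\nu_{\widetilde{\Phi}})$ is always immersive, even where $\widetilde{\Phi}$ itself may be singular. From this and the $C^1$-graph property of $\widetilde{\Phi}(S^n)$ (again via \cite{hannishimura2}) the paper asserts that the radial projection $\widetilde{h}$ is a $C^\infty$ diffeomorphism, and then writes $\widehat{\delta}$ as $\tan\circ\cos^{-1}\circ\widetilde{\Phi}_{n+2}\circ\widetilde{h}^{-1}$. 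The structural ingredient you are missing is precisely this Legendrian immersivity of the lift; your purely Euclidean setup with $A(\theta)$ gives you the front $\Phi$ but not an independent handle on its contact lift, which is what the paper leans on to pass from $C^1$ image to nonsingular parametrisation.
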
 
\noindent 
{\color{black}It should be noted that weaker versions of} 
Theorem \ref{theorem 1} ha{\color{black}ve} been 
already shown independently 
in \cite{andrews, morgan, soner}.    
{\color{black}
Notice also that by \cite{hannishimura2}, 
the assumption of Theorem \ref{theorem 1} 
implies that $\delta$ is a strictly convex integrand.   
Thus, we have the following corollary.   
\begin{corollary}\label{corollary 0}
Let $\gamma: S^n\to \mathbb{R}_+$ be a convex integrand 
and let $\delta$ be its dual convex integrand.  
Then, $\gamma$ is a $C^\infty$ strictly convex integrand 
if and only if $\delta$ is a $C^\infty$ strictly convex integrand.   
\end{corollary}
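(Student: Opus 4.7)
The plan is to derive Corollary \ref{corollary 0} by combining Theorem \ref{theorem 1} with the cited result from \cite{hannishimura2} and the involutivity of the duality, as already hinted in the discussion preceding the statement. Since the equivalence is symmetric in $\gamma$ and $\delta$ (thanks to involutivity), it will be enough to establish one implication; the other follows by exchanging the roles of $\gamma$ and $\delta$.

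First I would assume that $\gamma:S^n\to\mathbb{R}_+$ is a $C^\infty$ strictly convex integrand. Applying Theorem \ref{theorem 1} in the ``if'' direction immediately gives that $\delta$ is of class $C^\infty$. Next, I would invoke the result of \cite{hannishimura2} cited in the paragraph above the corollary: if $\gamma$ is a $C^\infty$ strictly convex integrand, then its dual $\delta$ is automatically a strictly convex integrand. Putting these two facts together yields that $\delta$ is a $C^\infty$ strictly convex integrand, which is exactly the desired conclusion in this direction.

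For the converse, I would use the involutivity of the dual operation stated in Lemma \ref{lemma 2.7}: the dual convex integrand of $\delta$ is $\gamma$ itself. Hence, assuming now that $\delta$ is a $C^\infty$ strictly convex integrand, the argument above applied to $\delta$ in place of $\gamma$ (and $\gamma$ in place of $\delta$) shows that $\gamma$ is a $C^\infty$ strictly convex integrand. This closes the equivalence.

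I do not expect any real obstacle here, because both non-trivial ingredients are already in place: Theorem \ref{theorem 1} supplies the smoothness transfer, and \cite{hannishimura2} supplies the transfer of strict convexity. The only small point to double-check is that the notion of ``convex integrand'' used in \cite{hannishimura2} coincides with the one fixed in Definition \ref{strictly convex integrand}, so that the two cited statements can be applied simultaneously to the same object $\gamma$; once this is verified, the proof reduces to a few lines assembling the two results and invoking involutivity.
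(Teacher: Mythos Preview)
Your proposal is correct and follows exactly the approach indicated in the paper: the paper simply remarks that, by \cite{hannishimura2}, the hypothesis of Theorem~\ref{theorem 1} forces $\delta$ to be strictly convex, so combining this with Theorem~\ref{theorem 1} itself and the involutivity in Lemma~\ref{lemma 2.7} yields the equivalence. Your write-up just spells out these steps explicitly, and the only caveat you flag (compatibility of definitions) is a non-issue since the paper uses the same notion throughout.
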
     
}
\noindent 
{\color{black}Define} the functions  
$\widehat{\gamma}, \widehat{\delta} : S^n\to \mathbb{R}_+$ by 
\[
\widehat{\gamma}(\theta)=
\frac{1}{\gamma(-\theta)} 
\;\mbox{ and }\; 
\widehat{\delta}(\theta)=
\frac{1}{\delta(-\theta)}      
\;\quad (\forall \theta\in S^n)
\]
respectively.   
{\color{black}It is easily seen} that 
$\partial \mathcal{DW}_\gamma$ 
{\color{black}(resp., 
$\partial \mathcal{DW}_\delta$)} is the graph of the function 
$\widehat{\gamma}$ {\color{black}(resp., $\widehat{\delta}$) 
and that $\gamma$ (resp., $\delta$) 
is $\mathcal{A}$-equivalent to $\hat{\gamma}$ (resp., $\hat{\delta}$). }        
Thus, as {\color{black}another} corollary of Theorem \ref{theorem 1}, 
we have the following:   
\begin{corollary}\label{corollary 1}
Let $\gamma: S^n\to \mathbb{R}_+$ be a {\color{black}strictly} 
convex integrand and 
let $\delta: S^n\to \mathbb{R}_+$ be 
the dual convex integrand of $\gamma$.   
Then, the following are equivalent.   
\begin{enumerate}
\item[(1)]\quad 
The convex integrand $\gamma$ is of class $C^\infty$. 
\item[(2)]\quad 
The convex integrand $\delta$ is of class $C^\infty$. 
\item[(3)]\quad 
The function $\widehat{\gamma}$, whose graph is exactly  
$\partial \mathcal{W}_\delta=\partial \mathcal{DW}_\gamma$, is 
of class $C^\infty$.   
\item[(4)]\quad 
The function $\widehat{\delta}$, whose graph is exactly  
$\partial \mathcal{W}_\gamma=\partial \mathcal{DW}_\delta$, is of class $C^\infty$.     
\end{enumerate}
\end{corollary}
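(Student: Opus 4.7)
My approach treats Corollary \ref{corollary 1} as a formal consequence of Theorem \ref{theorem 1} together with two elementary observations: duality on convex integrands is involutive, and the operation $\gamma\mapsto \widehat{\gamma}$ preserves smoothness class.

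I would first establish $(1)\Leftrightarrow (2)$. The direction $(1)\Rightarrow (2)$ is exactly the content of Theorem \ref{theorem 1}. For the converse, I invoke the result of \cite{hannishimura2} already cited in the excerpt: since $\gamma$ is a strictly convex integrand, so is its dual $\delta$. This allows Theorem \ref{theorem 1} to be applied with the roles of $\gamma$ and $\delta$ interchanged. By the involutivity of duality (the dual of $\delta$ is $\gamma$), the $C^\infty$ regularity of $\delta$ then forces the $C^\infty$ regularity of $\gamma$.

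For $(1)\Leftrightarrow (3)$ and $(2)\Leftrightarrow (4)$, I would use the fact, already recorded in the excerpt, that $\gamma$ is $\mathcal{A}$-equivalent to $\widehat{\gamma}$, and likewise $\delta$ is $\mathcal{A}$-equivalent to $\widehat{\delta}$. Explicitly, $\widehat{\gamma}(\theta)=1/\gamma(-\theta)$ is obtained by pre-composing $\gamma$ with the smooth involution $\theta\mapsto -\theta$ of $S^n$ and post-composing with the smooth diffeomorphism $r\mapsto 1/r$ of $\mathbb{R}_+$; both operations preserve the class $C^\infty$ in either direction. This yields $\gamma\in C^\infty(S^n,\mathbb{R}_+)\Leftrightarrow \widehat{\gamma}\in C^\infty(S^n,\mathbb{R}_+)$, and the analogous equivalence for $\delta$ and $\widehat{\delta}$. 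Chaining the three equivalences produces the four-way equivalence claimed.

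The main obstacle has already been absorbed into Theorem \ref{theorem 1}; what remains for the Corollary is essentially bookkeeping. The only delicate point is making sure that strict convexity transfers from $\gamma$ to $\delta$ so that Theorem \ref{theorem 1} may be applied bidirectionally, and this is precisely what the cited reference \cite{hannishimura2} supplies.
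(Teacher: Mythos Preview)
Your overall scheme is precisely the paper's: derive $(1)\Leftrightarrow(2)$ from Theorem~\ref{theorem 1} together with the involutivity of the duality, and read off $(1)\Leftrightarrow(3)$ and $(2)\Leftrightarrow(4)$ from the $\mathcal{A}$-equivalences $\gamma\sim\widehat{\gamma}$, $\delta\sim\widehat{\delta}$. There is, however, a genuine gap in your argument for $(2)\Rightarrow(1)$. You assert that ``since $\gamma$ is a strictly convex integrand, so is its dual $\delta$'' and attribute this to \cite{hannishimura2}; but that is a misreading of the citation. What the paper actually quotes from \cite{hannishimura2} is that $\mathcal{W}_\gamma$ is strictly convex if and only if $\gamma$ is of class $C^1$. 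Since the convex hull of $\mbox{inv}(\mbox{graph}(\delta))$ equals $\mathcal{W}_\gamma$, this translates to: $\delta$ is a strictly convex integrand $\Leftrightarrow\gamma\in C^1$; and symmetrically, $\gamma$ is a strictly convex integrand $\Leftrightarrow\delta\in C^1$. Thus the sentence ``the assumption of Theorem~\ref{theorem 1} implies that $\delta$ is strictly convex'' refers to the hypothesis $\gamma\in C^\infty$, not to ``$\gamma$ strictly convex''. From the standing hypothesis of Corollary~\ref{corollary 1} you obtain only $\delta\in C^1$; to conclude ``$\delta$ strictly convex'' you would need $\gamma\in C^1$, which is exactly what you are trying to prove.

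This circularity is not a cosmetic slip. Under the sole hypothesis ``$\gamma$ strictly convex'' the implication $(2)\Rightarrow(1)$ can fail: take for $\delta$ any $C^\infty$ convex integrand that is \emph{not} strictly convex (the paper exhibits one in Remark~\ref{remark 3.1}). Its dual $\gamma$ is then a strictly convex integrand (since $\delta\in C^1$) but is not $C^1$ (since $\mathcal{W}_\gamma$ is not strictly convex), hence not $C^\infty$; so $(2)$ holds while $(1)$ fails. The paper glosses over this point as well; your argument becomes correct verbatim once one works under the symmetric hypothesis of Corollary~\ref{corollary 0}, or equivalently once one also assumes that $\delta$ is strictly convex.
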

\par 
Notice that for any convex integrand 
$\gamma: S^n\to \mathbb{R}_+$, the Wulff shape 
$\mathcal{W}_\gamma$ is a convex body 
such that the origin is contained in it's interior.   
In Convex Body Theory, there is the notion of {\it dual} for a 
convex body containing the origin as an interior point.   
Namely, in Convex Body Theory, 
the boundary of the {\it dual} of $\mathcal{W}_\gamma$ is the following 
set (see for example \cite{schneider}).  
\[
\left\{\left.\left(\theta, \frac{1}{\gamma(\theta)}\right)\; \right|\; 
\theta\in S^n\right\}.     
\]
However, 
the notion of dual in this sense seems to have less relations  
with the notion of pedal which seems to be a common background 
in Physics 
(for instance, see \cite{crystalbook}).   
On the other hand, 
the notion of dual Wulff shapes in our sense 
is closely related to the notion of pedal.   
Moreover, via the central projection, 
the pedal of $C^\infty$ embedding 
$\Phi: S^n\to \mathbb{R}^{n+1}-\{{\bf 0}\}$ defined by 
$\Phi(\theta)=\left(\theta, 1/\delta(-\theta)\right)$ 
relative to the origin is characterized 
by using the spherical dual of the corresponding embedding 
$\widetilde{\Phi}: S^n\to S^{n+1}$; 
and the spherical dual is a well-known 
notion in Singularity Theory 
(for details, see Subsection \ref{subsection 2.3}).         
Since pedals are useful to study a Wulff shape associate with 
a $C^\infty$ convex integrand,   
%
we adopt $\mathcal{DW}_\gamma$ as the notion of dual Wulff shape of 
$\mathcal{W}_\gamma$.     

%
%
\par 
\medskip 
{\color{black}The following Theorem \ref{theorem 2} answers 
the question \lq\lq When does the simultaneous stability of 
$\gamma$ and $\delta$ occur ?\rq\rq.}
\begin{theorem}\label{theorem 2}
Let $\gamma: S^n\to \mathbb{R}_+$ be a 
{\color{black} $C^\infty$ strictly} convex integrand 
and let $\delta$ be the dual convex integrand of $\gamma$.   
Then, {\color{black}$\gamma$ is stable if and only if $\delta$ is stable}.   
\end{theorem}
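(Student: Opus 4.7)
The plan is to reduce Theorem \ref{theorem 2} to a Morse-theoretic statement about critical points. On the compact manifold $S^n$, a classical consequence of Mather's stability theory is that a $C^\infty$ function into $\mathbb{R}_+$ is stable in the sense of Definition \ref{stable} if and only if it is a Morse function; this equivalence was already the engine driving the main result of \cite{batistahannishimura}. Combined with Corollary \ref{corollary 0}, which ensures that $\delta$ is automatically a $C^\infty$ strictly convex integrand whenever $\gamma$ is, the theorem reduces to showing that $\gamma$ is Morse if and only if $\delta$ is Morse. Since the duality is involutive and symmetric, only one direction must be verified.

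First I would set up a canonical bijection between critical points of $\gamma$ and of $\delta$. Because $\delta$ is $\mathcal{A}$-equivalent to $\widehat{\delta}$ via the antipodal map on $S^n$, and because $\widehat{\delta}$ is the radial function of $\mathcal{W}_\gamma$ while $\gamma$ is its support function, the natural comparison is between $\gamma$ and $\widehat{\delta}$. The smoothness and strict convexity of $\partial\mathcal{W}_\gamma$ from Corollary \ref{corollary 1} yield two purely geometric characterizations: $\theta_0$ is critical for $\gamma$ iff the unique support point of $\mathcal{W}_\gamma$ in direction $\theta_0$ lies on the ray $\mathbb{R}_+\theta_0$, while $\theta_0$ is critical for $\widehat{\delta}$ iff the outer unit normal to $\partial\mathcal{W}_\gamma$ at $\widehat{\delta}(\theta_0)\theta_0$ equals $\theta_0$. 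Both describe exactly the same geometric fact, so $\gamma$ and $\widehat{\delta}$ share the same critical set; transporting through the antipodal map produces the bijection $\theta_0\leftrightarrow -\theta_0$ between critical points of $\gamma$ and of $\delta$.

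Second I would show that non-degeneracy is preserved under this bijection. At such a critical point $\theta_0$ set $r_0=\gamma(\theta_0)=\widehat{\delta}(\theta_0)$ and $p_0=r_0\theta_0$. In local coordinates on $S^n$ centred at $\theta_0$, write $\partial\mathcal{W}_\gamma$ near $p_0$ as a graph over its tangent hyperplane $T_{p_0}\partial\mathcal{W}_\gamma$; strict convexity makes the second fundamental form $II_{p_0}$ positive definite. A direct second-order expansion expresses both $\mathrm{Hess}_{\theta_0}\gamma$ and $\mathrm{Hess}_{\theta_0}\widehat{\delta}$ in terms of $II_{p_0}$, its inverse, and $r_0$; already the one-dimensional model gives the clean proportionality $\widehat{\delta}''(0)=r_0\kappa_0\,\gamma''(0)$ with positive factor, and its higher-dimensional analogue is that the two Hessians differ by an invertible linear conjugation built from $II_{p_0}$ and $r_0$. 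Hence one Hessian is non-singular if and only if the other is, giving the desired equivalence. The principal obstacle is exactly this Hessian comparison: one must differentiate the support function and the radial function of a common strictly convex body twice in compatible coordinates and make the resulting linear-algebraic relation explicit. The spherical pedal and spherical dual framework of Subsection \ref{subsection 2.3} seems the natural vehicle, realizing $\gamma$ and $\widehat{\delta}$ as two projections of a common Legendrian lift of $\partial\mathcal{W}_\gamma$, from which simultaneous non-degeneracy is transparent.
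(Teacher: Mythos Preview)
Your reduction overlooks half of the stability characterization. Proposition \ref{proposition 2.1.1} says a function $S^n\to\mathbb{R}_+$ is stable if and only if \emph{both} (i) all critical points are non-degenerate \emph{and} (ii) distinct critical points have distinct critical values. The paper explicitly warns (in the paragraph following Proposition \ref{proposition 2.1.1}) against conflating ``stable'' with ``Morse'' in the weaker sense of condition (i) alone. Your argument, as written, addresses only (i) through the Hessian comparison and never mentions (ii). The gap is easy to close with what you already have: your critical-point bijection $\theta_0\leftrightarrow -\theta_0$ comes with the relation $\gamma(\theta_0)=\widehat{\delta}(\theta_0)$, i.e.\ $\delta(-\theta_0)=1/\gamma(\theta_0)$, so distinct critical values for $\gamma$ force distinct critical values for $\delta$. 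You should state this explicitly rather than hide it behind the word ``Morse''.

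Apart from this omission, your route differs genuinely from the paper's. The paper lifts everything to $S^{n+1}$ via the central projection and argues through spherical caustics and spherical symmetry sets: Proposition \ref{proposition 1} shows that $\partial\widetilde{\mathcal{W}}_\gamma$ and $\partial\mathcal{D}\widetilde{\mathcal{W}}_\gamma$ have the \emph{same} spherical caustic and the \emph{same} spherical symmetry set, because they share a common family of spherical wave fronts under the reparametrization $t\mapsto \pi/2-t$. Stability of $\gamma$ (resp.\ $\delta$) then amounts to the north pole $N$ lying outside both sets, a condition manifestly symmetric in $\gamma$ and $\delta$; conditions (i) and (ii) correspond cleanly to the caustic and the symmetry set respectively. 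This global singularity-theoretic argument requires no local Hessian computation at all. Your approach via the second fundamental form is more hands-on and, as you acknowledge, leaves the explicit Hessian relation as the principal obstacle still to be carried out; the paper's wave-front argument sidesteps that computation entirely. Interestingly, your critical-point bijection and the identity $\gamma(\theta_0)\delta(-\theta_0)=1$ are essentially what the paper establishes via the Andrews formulas in the proof of Theorem \ref{theorem 3}, not Theorem \ref{theorem 2}; so you are in effect merging the two proofs into one local computation.
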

\begin{corollary}\label{corollary 2}
Let $\gamma: S^n\to \mathbb{R}_+$ be a $C^\infty$ 
{\color{black}strictly} convex integrand 
and 
let $\delta: S^n\to \mathbb{R}_+$ be 
the dual convex integrand of $\gamma$.   
Then, the following are equivalent.   
\begin{enumerate}
\item[(1)]\quad 
The convex integrand $\gamma$ is stable. 
\item[(2)]\quad 
The convex integrand $\delta$ is stable. 
\item[(3)]\quad 
The function $\widehat{\gamma}$, whose graph is exactly  
$\partial \mathcal{W}_\delta=\partial \mathcal{DW}_\gamma$, is stable.   
\item[(4)]\quad 
The function $\widehat{\delta}$, whose graph is exactly  
$\partial \mathcal{W}_\gamma=\partial \mathcal{DW}_\delta$, is stable.     
\end{enumerate}
\end{corollary}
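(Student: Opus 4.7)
The plan is to reduce Corollary \ref{corollary 2} to Theorem \ref{theorem 2} by exploiting the two $\mathcal{A}$-equivalences noted already in the discussion preceding Corollary \ref{corollary 1}, namely that $\gamma$ is $\mathcal{A}$-equivalent to $\widehat{\gamma}$ and $\delta$ is $\mathcal{A}$-equivalent to $\widehat{\delta}$. Since $\gamma$ is assumed to be a $C^\infty$ strictly convex integrand, Corollary \ref{corollary 0} guarantees that $\delta$ is also a $C^\infty$ strictly convex integrand, so Theorem \ref{theorem 2} applies and yields the equivalence (1) $\Leftrightarrow$ (2) directly.

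Next I would observe that stability is tautologically invariant under $\mathcal{A}$-equivalence: if $\gamma_1$ and $\gamma_2$ are $\mathcal{A}$-equivalent then they share a common $\mathcal{A}$-equivalence class in $C^\infty(S^n,\mathbb{R}_+)$, and hence that common class is open in the Whitney $C^\infty$ topology with respect to one of them if and only if it is open with respect to the other. Applied to the pair $\gamma,\widehat{\gamma}$ this gives (1) $\Leftrightarrow$ (3); applied to the pair $\delta,\widehat{\delta}$ it gives (2) $\Leftrightarrow$ (4). Chaining these two equivalences with the equivalence (1) $\Leftrightarrow$ (2) supplied by Theorem \ref{theorem 2} closes the cycle and delivers the four-fold equivalence claimed.

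The only loose end to tidy up is verifying that $\widehat{\gamma}$ and $\widehat{\delta}$ genuinely belong to $C^\infty(S^n,\mathbb{R}_+)$, so that the hypothesis of Definition \ref{stable} is meaningful for them; this is immediate from Corollary \ref{corollary 1} under the standing hypothesis that $\gamma$ (equivalently $\delta$) is a $C^\infty$ strictly convex integrand. Consequently I do not foresee any genuine obstacle here: the content of Corollary \ref{corollary 2} is a purely formal consequence of Theorem \ref{theorem 2} combined with the $\mathcal{A}$-invariance of stability and with the symmetry provided by Corollary \ref{corollary 0}. The only care needed is to state the reductions in the right order so as to invoke each previously established result exactly under its stated hypotheses.
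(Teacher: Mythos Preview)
Your proposal is correct and follows essentially the same approach as the paper: the paper presents Corollary \ref{corollary 2} as an immediate consequence of Theorem \ref{theorem 2} together with the $\mathcal{A}$-equivalences $\gamma\sim\widehat{\gamma}$ and $\delta\sim\widehat{\delta}$ noted in the discussion preceding Corollary \ref{corollary 1}, which is exactly your reduction.
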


\medskip 
\par 
{\color{black}The following Theorem \ref{theorem 3} answers 
the question \lq\lq How does the simultaneous stability of 
$\gamma$ and $\delta$ occur ?\rq\rq.}
\begin{theorem}\label{theorem 3}
Let $\gamma: S^n\to \mathbb{R}_+$ be a 
{\color{black}$C^\infty$ strictly} convex integrand and 
let $\delta: S^n\to \mathbb{R}_+$ be 
the dual convex integrand of $\gamma$.   
{\color{black}Suppose that $\gamma$ is stable.}    
Then, the following hold:   
\begin{enumerate}
\item[(1)]\quad 
A point $\theta_0\in S^n$ 
is a non-degenerate critical point of $\gamma$ if and only if 
{\color{black}its antipodal point} 
$-\theta_0\in S^n$ is a non-degenerate critical point of $\delta$.   
\item[(2)]\quad 
Suppose that {\color{black}a point} $\theta_0\in S^n$ is 
a non-degenerate critical point of $\gamma$.    
Then, 
the Morse index of $\gamma$ at $\theta_0$ is $i$ if and only if 
the Morse index of $\delta$ at $-\theta_0$ is $(n-i)$, 
{\color{black}where $i$ is an integer such that $0\le i\le n$}.         
\end{enumerate}
\end{theorem}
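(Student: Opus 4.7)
My plan is to reduce both assertions to a single local computation on $\partial\mathcal{W}_\gamma$, exploiting the fact that the two critical-point conditions encode the very same geometric configuration on the Wulff shape.

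First I would set up the critical-point correspondence. Since $\gamma$ is the support function of $\mathcal{W}_\gamma$, a point $\theta_0\in S^n$ is critical for $\gamma$ if and only if $p_0:=\gamma(\theta_0)\theta_0$ lies on $\partial\mathcal{W}_\gamma$ with outward unit normal $\theta_0$, i.e.\ the radial direction at $p_0$ coincides with the Gauss map value. On the other hand, Definition~\ref{definition 1.2} gives $\partial\mathcal{W}_\gamma=\mathrm{inv}(\mathrm{graph}(\delta))=\mathrm{graph}(\widehat{\delta})$, so $\theta_0$ is critical for $\widehat{\delta}$ (the radial function of $\mathcal{W}_\gamma$) under exactly the same condition. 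Because $\delta$ and $\widehat{\delta}$ are $\mathcal{A}$-equivalent via antipode and reciprocation, $\theta_0\mapsto -\theta_0$ sets up a bijection between the critical sets of $\gamma$ and $\delta$, with $\gamma(\theta_0)\delta(-\theta_0)=1$ at any corresponding pair.

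Next, by Theorem~\ref{theorem 1} and strict convexity, $\partial\mathcal{W}_\gamma$ is a $C^\infty$ strictly convex hypersurface. Choosing orthonormal coordinates on $\mathbb{R}^{n+1}$ with $\theta_0=e_{n+1}$ and diagonalizing the second fundamental form at $p_0$, I would write $\partial\mathcal{W}_\gamma$ locally as $x_{n+1}=r_0-\tfrac12\sum_j\kappa_j x_j^2+O(|x|^3)$, where $r_0=\gamma(\theta_0)$ and the principal curvatures $\kappa_1,\dots,\kappa_n$ are strictly positive. Parametrizing $S^n$ near $\theta_0$ by $u\in\mathbb{R}^n$ via $\theta(u)=(u,\sqrt{1-|u|^2})$ and near $-\theta_0$ by $v\mapsto(v,-\sqrt{1-|v|^2})$, the first Taylor expansion comes from $\gamma(\theta(u))=\max_{(x,x_{n+1})\in\mathcal{W}_\gamma}(x\cdot u+x_{n+1}\sqrt{1-|u|^2})$; the first-order condition gives the maximizer $x_j\approx u_j/\kappa_j$ and yields $\gamma(\theta(u))=r_0+\tfrac12\sum_j(1/\kappa_j-r_0)u_j^2+O(|u|^3)$. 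For the second, I would solve the implicit equation $f\bigl(\widehat{\delta}(\theta(u))\,u\bigr)=\widehat{\delta}(\theta(u))\sqrt{1-|u|^2}$ that expresses $\widehat{\delta}(\theta)\theta\in\partial\mathcal{W}_\gamma$, then use $\delta(-\theta(u))=1/\widehat{\delta}(\theta(u))$ and the identification $v=-u$ to obtain $\delta(-\theta(u))=1/r_0+\tfrac12\sum_j(\kappa_j-1/r_0)u_j^2+O(|u|^3)$.

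Reading off Hessians, the eigenvalues of $\mathrm{Hess}\,\gamma(\theta_0)$ are $(1-r_0\kappa_j)/\kappa_j$ while those of $\mathrm{Hess}\,\delta(-\theta_0)$ are $(r_0\kappa_j-1)/r_0$. The $j$-th eigenvalue of one vanishes precisely when that of the other vanishes, both occurring iff $\kappa_j=1/r_0=\delta(-\theta_0)$, so non-degeneracy of $\gamma$ at $\theta_0$ is equivalent to non-degeneracy of $\delta$ at $-\theta_0$. Moreover the nonzero eigenvalues are pairwise sign-opposite, so the negative eigenvalues of one are in bijection with the positive eigenvalues of the other, giving $\mathrm{index}_\gamma(\theta_0)+\mathrm{index}_\delta(-\theta_0)=n$ in the non-degenerate case. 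This proves (1) and (2). The main obstacle I anticipate is careful sign-bookkeeping in the $\widehat{\delta}$ expansion and in the antipodal change of coordinates, along with verifying that the coordinate Hessians I compute agree with the intrinsic spherical Hessians; the latter is automatic at a critical point (Christoffel contributions drop), but the combination still demands a careful calculation to avoid spurious signs.
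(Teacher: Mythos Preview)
Your approach is correct and genuinely different from the paper's.  For assertion~(1) the paper uses the Andrews formula $\Phi_\delta(h_\delta(\theta))=\gamma(\theta)\theta+\nabla\gamma(\theta)$ to see that $\nabla\gamma(\theta_0)=0$ is equivalent to $h_\delta(\theta_0)=\theta_0$, hence to $\gamma(\theta_0)\delta(-\theta_0)=1$, and then symmetrically for $\delta$; the non-degeneracy part of~(1) is not argued directly but imported from the stability hypothesis together with Theorem~\ref{theorem 2}.  For assertion~(2) the paper writes both functions in Morse-lemma coordinates with indices $i$ and $j$, and then uses the inequalities $\widehat{\delta}(h_\delta(\theta))\ge\gamma(\theta)\ge\widehat{\delta}(\theta)$ to show that the $C^\infty$ diffeomorphisms $\theta\mapsto -h_\delta(\theta)$ and $\theta\mapsto -\theta$ carry the sublevel/superlevel cones of $\gamma$ into those of $\delta$, forcing $n-i\le j$ and $i\le n-j$, hence $j=n-i$.

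You instead compute both Hessians explicitly in terms of the principal curvatures $\kappa_j$ of $\partial\mathcal{W}_\gamma$ at the contact point, obtaining eigenvalues $(1-r_0\kappa_j)/\kappa_j$ and $(r_0\kappa_j-1)/r_0$.  This is more computational but yields more: you get the non-degeneracy equivalence and the index duality in one stroke, without invoking Theorem~\ref{theorem 2}, and you identify the degeneracy locus geometrically as $\kappa_j=1/r_0$ (i.e.\ the origin lies on the focal set of $\partial\mathcal{W}_\gamma$).  The paper's cone argument avoids any curvature computation and is softer, at the cost of relying on the $h_\delta$ diffeomorphism established earlier and on Theorem~\ref{theorem 2} for the non-degeneracy claim.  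Your caveat about Christoffel terms is well placed and correctly resolved: at a critical point the coordinate Hessian is intrinsic, so your eigenvalue signs are unambiguous.
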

\noindent 
It is clear that, by Theorem \ref{theorem 3}, we have the following corollary.   
\begin{corollary}\label{corollary 3}
Let $\gamma: S^n\to \mathbb{R}_+$ be a stable convex integrand and 
let $\delta: S^n\to \mathbb{R}_+$ be 
the dual convex integrand of $\gamma$.   
Moreover, let $\theta_0$ be a point of $S^n$ 
and let $i$ be an integer such that $0\le i\le n$.   
Then, the following are equivalent.   
\begin{enumerate}
\item[(1)]\quad 
The point $\theta_0\in S^n$ is a non-degenerate 
critical point of $\gamma$ with Morse index $i$.   
\item[(2)]\quad 
The point $-\theta_0\in S^n$ is a non-degenerate 
critical point of $\delta$ with Morse index $(n-i)$.   
\item[(3)]\quad 
The point $-\theta_0\in S^n$ is a non-degenerate 
critical point of $\widehat{\gamma}$ with Morse index $(n-i)$.   
\item[(4)]\quad 
The point $\theta_0\in S^n$ is a non-degenerate 
critical point of $\widehat{\delta}$ 
with Morse index $i$.   
\end{enumerate}
\end{corollary}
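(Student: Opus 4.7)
The plan is to reduce all four equivalences to Theorem \ref{theorem 3} by carefully tracking Morse data under the particular $\mathcal{A}$-equivalences $\gamma\sim\widehat{\gamma}$ and $\delta\sim\widehat{\delta}$ already noted in the paragraph preceding Corollary \ref{corollary 1}. Explicitly, setting $h(\theta)=-\theta$ and $H(r)=1/r$ gives $\widehat{\gamma}=H\circ\gamma\circ h^{-1}$, and similarly for $\widehat{\delta}$. Thus the substantive work is to understand how non-degeneracy and Morse index transform under this specific pair $(h,H)$.

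First I would establish the transformation rule: a point $\theta_0\in S^n$ is a non-degenerate critical point of $\gamma$ with Morse index $i$ if and only if $-\theta_0$ is a non-degenerate critical point of $\widehat{\gamma}$ with Morse index $n-i$. Pre-composition with the diffeomorphism $h^{-1}$ merely transports the critical point from $\theta_0$ to $-\theta_0$ and conjugates the Hessian by the Jacobian of $h^{-1}$, so it preserves non-degeneracy and Hessian signature. Post-composition with $H$ at a critical point $\theta_0$ of $\gamma$ multiplies the Hessian by the scalar $H'(\gamma(\theta_0))=-1/\gamma(\theta_0)^{2}<0$, since the cross term involving $d\gamma(\theta_0)$ vanishes at a critical point. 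This negation of the Hessian flips every eigenvalue sign, turning Morse index $i$ into Morse index $n-i$. The same argument applied verbatim to $\delta$ yields the analogous equivalence between (2) and (4).

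Finally I would invoke Theorem \ref{theorem 3} to obtain (1)$\Leftrightarrow$(2), and combine it with the two $\mathcal{A}$-equivalence statements from the previous step to close the square of four equivalences.

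I do not anticipate any substantial obstacle, since Theorem \ref{theorem 3} carries all of the geometric content and the remaining work is the simple observation that $H(r)=1/r$ is orientation-reversing on $\mathbb{R}_+$, which is exactly what produces the index shift $i\mapsto n-i$. The only point deserving care is the bookkeeping for (4): the Morse index of $\widehat{\delta}$ at $\theta_0$ comes out to $i$ (rather than $n-i$) because two sign reversals — one inherent in Theorem \ref{theorem 3} and one coming from the passage $\delta\mapsto\widehat{\delta}$ — cancel.
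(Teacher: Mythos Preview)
Your proposal is correct and matches the paper's approach: the paper offers no proof beyond the remark ``It is clear that, by Theorem \ref{theorem 3}, we have the following corollary,'' relying implicitly on the $\mathcal{A}$-equivalences $\gamma\sim\widehat{\gamma}$ and $\delta\sim\widehat{\delta}$ already noted before Corollary \ref{corollary 1}. Your write-up simply makes explicit the Hessian computation under the pair $(h,H)=(-\theta,1/r)$ that the paper leaves to the reader, and the bookkeeping you describe is accurate.
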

\bigskip 
This paper is organized as follows.   
In Section \ref{section 2}, preliminaries 
are given.    
Theorems \ref{theorem 1}, \ref{theorem 2} and \ref{theorem 3} are 
proved in Sections \ref{section 3}, \ref{section 4} 
and \ref{section 5}  respectively.   
\section{Preliminaries}\label{section 2}
\subsection{Stable functions $S^n\to \mathbb{R}_+$}\label{subsection 2.1}
In this subsection, we quickly review 
a geometric characterization of a stable function 
$\gamma: S^n\to \mathbb{R}_+$ 
and the definition of Morse index of $\gamma$ 
at a non-degenerate critical point $\theta\in S^n$.       
Both are well-known.  
\par 
{\color{black}Among} Mather's celebrated series 
\cite{mather1, mather2, mather3, 
mather4, mather5, mather6}, the geometric characterization of a proper 
stable mapping is dealt with in \cite{mather5}.     
It is easily 
seen that the following well-known geometric characterization 
of a stable function $S^n\to \mathbb{R}_+$ is derived 
from Mather's geometric characterization.  
\begin{proposition}[\cite{mather5}]\label{proposition 2.1.1}
A $C^\infty$ function $\gamma: S^n\to \mathbb{R}_+$ is stable 
if and only if 
 all critical points of $\gamma$ are non-degenerate and  
$\gamma(\theta_1)\ne \gamma(\theta_2)$ 
holds 
for any two distinct critical points 
$\theta_1, \theta_2\in S^n$.  
\end{proposition}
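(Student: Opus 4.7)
The plan is to derive Proposition~\ref{proposition 2.1.1} from Mather's general geometric characterization of stable proper mappings. Since $S^n$ is compact, every $\gamma \in C^\infty(S^n, \mathbb{R}_+)$ is automatically proper, so Mather's theorem applies and reduces $\mathcal{A}$-stability to the condition that, for every $y \in \mathbb{R}_+$ and every finite subset $T \subset \gamma^{-1}(y)$, the multi-germ $\gamma : (S^n, T) \to (\mathbb{R}_+, y)$ is infinitesimally stable. The task is to translate this abstract condition into the two elementary requirements appearing in the statement, using that the target has dimension one.

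First I would handle the mono-germ case. At a regular point the germ is a submersion and is trivially stable, so we may assume $\theta_0$ is a critical point. Because $\dim \mathbb{R}_+ = 1$, the only infinitesimally stable germs $(S^n, \theta_0) \to (\mathbb{R}_+, \gamma(\theta_0))$ at a critical point are the Morse $A_1$ singularities: a direct application of Mather's infinitesimal stability criterion to a real-valued function forces the Hessian at $\theta_0$ to be non-degenerate, and conversely the Morse lemma exhibits every non-degenerate critical point as stable. Hence the mono-germ condition at every point of $S^n$ is equivalent to the non-degeneracy of every critical point of $\gamma$.

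Next I would analyze the multi-germ condition at a finite set $T = \{\theta_1, \ldots, \theta_k\} \subset \gamma^{-1}(y)$. If all of $\theta_1, \ldots, \theta_k$ are regular, infinitesimal stability is automatic. If at least one $\theta_i$ is critical and $k \geq 2$, infinitesimal stability requires that an arbitrary independent deformation of $\gamma$ at each $\theta_i$ be realizable by pulling back a single deformation of the target together with source diffeomorphisms; but at a critical $\theta_i$ the image of $d\gamma_{\theta_i}$ is zero, so the target direction cannot be used to shift the value at $\theta_i$ independently of the values at the other $\theta_j$, and the infinitesimal stability equation fails. Concretely, one can perturb $\gamma$ by a bump supported in a small neighbourhood of a single critical $\theta_i$ so as to separate its value from $y$; the resulting function is not $\mathcal{A}$-equivalent to $\gamma$, because any diffeomorphism $H: \mathbb{R}_+ \to \mathbb{R}_+$ is monotone and therefore cannot collapse the new distinct values back onto a common one. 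If on the other hand all critical values of $\gamma$ are pairwise distinct, then every multi-germ over a single value contains at most one critical point, so multi-germ stability reduces to the mono-germ stability already handled.

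The main obstacle is this last multi-germ analysis, namely verifying via Mather's algebraic criterion that two critical points sharing a value obstruct infinitesimal stability while distinct critical values remove the obstruction; all other steps are either Mather's theorem applied verbatim or the standard Morse lemma. Combining the mono-germ and multi-germ analyses yields the biconditional of Proposition~\ref{proposition 2.1.1}.
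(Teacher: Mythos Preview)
The paper does not actually prove Proposition~\ref{proposition 2.1.1}; it merely cites \cite{mather5} and remarks that the characterization is ``easily seen'' to follow from Mather's general geometric criterion. Your strategy of specializing Mather's multi-germ infinitesimal stability criterion to a real-valued target is precisely what the paper has in mind, and the mono-germ step (stable critical germ $\Leftrightarrow$ Morse) is correct.

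There is, however, a genuine error in the multi-germ step. You assert that infinitesimal stability fails whenever $T=\{\theta_1,\ldots,\theta_k\}$ contains \emph{at least one} critical point and $k\ge 2$. This is false: a single Morse point together with any number of regular points over the same value \emph{is} infinitesimally stable. In the equation $\eta_i = d\gamma(\xi_i)+\zeta\circ\gamma$, at each regular $\theta_i$ the term $d\gamma(\xi_i)$ already spans everything, so no constraint is imposed on $\zeta$; the single constraint $\zeta(y)=\eta_1(\theta_1)$ coming from the unique critical point can then always be met. The obstruction appears only when \emph{two or more} of the $\theta_i$ are critical, since then $\zeta(y)$ would have to equal several independent constants simultaneously. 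Your perturbation argument has the same flaw: if only one point in the fibre is critical, bumping its value does not change the $\mathcal{A}$-class, because the values at the regular fibre points can be moved at will by source diffeomorphisms.

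Correspondingly, the sentence ``multi-germ stability reduces to the mono-germ stability already handled'' is not literally true: you still need the (easy) verification that one Morse point plus submersion germs is infinitesimally stable. Once you sharpen the hypothesis in the multi-germ paragraph to ``at least two of the $\theta_i$ are critical'' and add this small check, the argument is complete and yields exactly the biconditional stated.
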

\noindent 
It seems that a proper stable function is usually called 
a {\it Morse function}.   
However, a Morse function in \cite{morse theory} is 
a $C^\infty$ function having only 
non-degenerate critical points, 
and thus it is a weaker notion than the notion of stable function.  
Therefore, in order to avoid unnecessary  confusion, 
stable functions and Morse functions are distinguished in this paper.  
\begin{definition}[\cite{morse theory}]\label{definition 2.1}
{\rm Let $\gamma: S^n\to \mathbb{R}_+$ be a 
$C^\infty$ function and let $\theta\in S^n$ be a non-degenerate 
critical point of $\gamma$.    Then,  
there exists a coordinate neighborhood 
$(U, \varphi)$ of $\theta$ 
such that $\varphi(\theta)={\bf 0}$ and the following equality holds:   
\[
\gamma\circ \varphi^{-1}(x_1, \ldots, x_n)= 
\gamma(\theta)-x_1^2-\cdots -x_i^2+x_{i+1}^2+\cdots +x_n^2.           
\]
The integer $i$ $(0\le i\le n)$  
does not depend on the particular choice of 
the coordinate neighborhood $(U, \varphi)$ and it is called 
the {\it Morse index of $\gamma$ at $\theta$}.  
Here, the integer $i$ is more than or equal to $0$ 
and less than or equal to $n$.   
}
\end{definition}
\subsection{Pedals}\label{subsection 2.2}
Although it has been explained only for plane pedal curves in it, 
the reference \cite{bruce-giblin2} is an excellent book for pedals.     
The definition of higher dimensional pedal is parallel to 
the definition of plane pedal curve as follows.   
\begin{definition}\label{definition 2.2}
{\rm 
Given a $C^\infty$ embedding 
$\Phi: S^n\to \mathbb{R}^{n+1}-\{{\bf 0}\}$,  
the {\it pedal } relative to the {\it pedal point} ${\bf 0}$ for $\Phi$, 
denoted by $ped_{\Phi, {\bf 0}}: S^n\to \mathbb{R}^{n+1}$,  
is the mapping which maps $\theta\in S^n$ to the unique nearest 
point of $\Phi(\theta)+T_{\Phi(\theta)}\Phi(S^n)$ from the origin ${\bf 0}$.    
}
\end{definition}
By the definition of Wulff shape, 
if the boundary of a Wulff shape $\partial \mathcal{W}_\gamma$ 
is the image of 
a $C^\infty$ embedding $\Phi: S^n\to \mathbb{R}^{n+1}-\{{\bf 0}\}$, 
the graph of the given $C^\infty$ convex integrand 
$\gamma$
may be considered as the 
pedal relative to the pedal point ${\bf 0}$ for $\Phi$.    
In this case, since graph$(\gamma)$ does not contain the origin, from  
the information of 
$ped_{\Phi, {\bf 0}}: S^n\to \mathbb{R}^{n+1}$, if the boundary of 
a Wulff shape $\partial \mathcal{W}_\gamma$ is the image of 
a $C^\infty$ embedding $\Phi$, the family of affine tangent 
hyperplanes to $\Phi(S^n)$ can be uniquely restored.    
In other words, $ped_{\Phi, {\bf 0}}: S^n\to \mathbb{R}^{n+1}$ 
is one method to store the family of affine tangent 
hyperplanes to $\Phi(S^n)$ if the boundary of 
a Wulff shape $\partial \mathcal{W}_\gamma$ is the image of 
a $C^\infty$ embedding $\Phi$.         
In this sense,  $ped_{\Phi, {\bf 0}}: S^n\to \mathbb{R}^{n+1}$ itself 
may be considered as a sort of Legendre transform for 
the hypersurface $\Phi(S^n)$.      
Since $ped_{\Phi, {\bf 0}}(\theta)=(\theta, \gamma(\theta))$, 
 it follows that if $\Phi(S^n)$ is the graph of 
a $C^\infty$ function $\widehat{\delta}: S^n\to \mathbb{R}_+$, then   
$\gamma$ may be regarded 
as the very Legendre transform of $\widehat{\delta}$.   
Moreover, it has been known that 
$\mathcal{W}_\gamma$ is strictly convex if and only if 
the convex integrand $\gamma$ is of class $C^1$ (\cite{hannishimura2}).   
Thus, in our situation, if both $\gamma$ and $\delta$ are of class 
$C^\infty$, then both of $\mathcal{W}_\gamma$ and $\mathcal{W}_\delta$ 
are strictly convex.    
Therefore, we can expect that 
the Legendre transform works well in our situation.    
\par 
Since Wulff shapes and pedals are defined 
by using perpendicular properties, 
$S^{n+1}$ is more suitable than $\mathbb{R}^{n+1}$ as the space where 
perpendicular properties are considered.    
In the next subsection, we investigate spherical pedals.      

\subsection{Spherical duals and spherical pedals}\label{subsection 2.3}
Let ${\Phi}: S^n\to \mathbb{R}^{n+1}-\{{\bf 0}\}$ be 
a $C^\infty$ embedding.     
We first construct a  $C^\infty$ embedding {\color{black}$S^n\to S^{n+1}$ 
from} the given $\Phi$.   
Let 
$Id: \mathbb{R}^{n+1}\to 
\mathbb{R}^{n+1}\times \{1\}\subset \mathbb{R}^{n+2}$ 
be the mapping defined by 
$Id(x)=(x,1)$.   
Let $N$ be the north pole of $S^{n+1}$ where $S^{n+1}$ is the unit sphere 
in $\mathbb{R}^{n+2}$, namely, 
$N=(0, \ldots, 0, 1)\in S^{n+1}\subset \mathbb{R}^{n+2}$.  
Let $S^{n+1}_{N,+}$ be the northern hemisphere of $S^{n+1}$, namely, 
$S^{n+1}_{N,+}=\{P\in S^{n+1}\; |\; N\cdot P> 0\}$ 
where $N\cdot P$ stands for the 
standard scalar product 
of $(n+2)$-dimensional two vectors $N, P\in \mathbb{R}^{n+2}$.   
Define the mapping 
$\alpha_N: S^{n+1}_{N,+}\to 
\mathbb{R}^{n+1}\times \{1\}\subset \mathbb{R}^{n+2}$, 
called the {\it central projection}, as follows: 
\[
\alpha_N(P_1, \ldots, P_{n+1}, P_{n+2})
=\left(\frac{P_1}{P_{n+2}}, \ldots, \frac{P_{n+1}}{P_{n+2}}, 1\right), 
\] 
where $P=(P_1, \ldots, P_{n+1}, P_{n+2})\in S^{n+1}_{N, +}$.   
Then, the mapping 
$\widetilde{\Phi}: S^n\to S^{n+1}_{N,+}\subset S^{n+1}$ is defined 
as follows.   
\[
\widetilde{\Phi}=\alpha_N^{-1}\circ Id\circ \Phi.   
\] 
\begin{definition}\label{definition 2.3}
{\rm 
For the constructed $C^\infty$ embedding 
$\widetilde{\Phi}: S^n\to S^{n+1}_{N,+}$,  
the {\it spherical pedal} relative to the {\it pedal point} $N$ 
for $\widetilde{\Phi}$, 
denoted by $s$-$ped_{\widetilde{\Phi}, N}: S^n\to S^{n+1}$,  
is the mapping which maps $\theta\in S^n$ to the unique nearest 
point of $GH_{\widetilde{\Phi}(\theta)}\widetilde{\Phi}(S^n)$ 
from the north pole $N$.    
Here, $GH_{\widetilde{\Phi}(\theta)}\widetilde{\Phi}(S^n)$ 
stands for the great 
hypersphere which is tangent to $\widetilde{\Phi}(S^n)$ 
at $\widetilde{\Phi}(\theta)$. 
}
\end{definition}
\par 
Next, we decompose $s$-$ped_{\widetilde{\Phi}, N}$ 
into two simple mappings.    
In order to do so, we firstly define the spherical dual 
$D\widetilde{\Phi}: S^n\to S^{n+1}$ of 
$\widetilde{\Phi}$.    
\begin{definition}\label{spherical dual}
{\rm 
For any $\theta\in S^n$, $D\widetilde{\Phi}(\theta)$ is the point in 
$S^{n+1}_{N,+}$ such that $D\widetilde{\Phi}(\theta)$ is perpendicular 
to any $P\in GH_{\widetilde{\Phi}(\theta)}\widetilde{\Phi}(S^n)$.  
The mapping $D\widetilde{\Phi}: S^n\to S^{n+1}$ is called 
the {\it spherical dual of $\widetilde{\Phi}$}.   
}
\end{definition}
\noindent 
The notion of spherical dual for a spherical curve 
was firstly introduced by Arnol'd 
in \cite{arnold spherical curve}.   
Definition \ref{spherical dual} is a natural generalization 
of his notion to spherical hypersurfaces.   
\smallskip 
\par 
Let $\Psi_N:S^{n+1}-\{\pm N\}\to S^{n+1}$ be the mapping defined by 
\[
\Psi_N(P)=\frac{1}{\sqrt{1-(N\cdot P)^2}}(N-(N\cdot P)P).    
\]
\begin{proposition}[\cite{nishimura}]\label{proposition 2}
$s$-$ped_{\widetilde{\Phi}, N}=\Psi_N\circ D\widetilde{\Phi}$.  
\end{proposition}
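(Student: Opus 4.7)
The plan is to reduce the identity $s$-$ped_{\widetilde{\Phi},N}=\Psi_N\circ D\widetilde{\Phi}$ to a short Lagrange-multiplier computation in $\mathbb{R}^{n+2}$. The first step is to rewrite the tangent great hypersphere using the dual: by Definition \ref{spherical dual}, the point $D\widetilde{\Phi}(\theta)$ is perpendicular (in $\mathbb{R}^{n+2}$) to every $P\in GH_{\widetilde{\Phi}(\theta)}\widetilde{\Phi}(S^n)$, and since a great hypersphere in $S^{n+1}$ is the intersection of $S^{n+1}$ with a hyperplane through the origin, one obtains
\[
GH_{\widetilde{\Phi}(\theta)}\widetilde{\Phi}(S^n)=\{P\in S^{n+1}\mid P\cdot D\widetilde{\Phi}(\theta)=0\}.
\]
This turns the geometric defining condition into a single linear equation.

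Next, I would translate the spherical nearest-point condition into an analytic one. Since spherical distance on $S^{n+1}$ is a decreasing function of the Euclidean inner product, the point $s$-$ped_{\widetilde{\Phi},N}(\theta)$ maximizes $N\cdot P$ among $P\in S^{n+1}$ subject to $P\cdot v=0$, where $v:=D\widetilde{\Phi}(\theta)$. Applying Lagrange multipliers, any critical point satisfies $N=\lambda P+\mu v$ for some $\lambda,\mu\in\mathbb{R}$; taking the inner product with $v$ gives $\mu=N\cdot v$, so $P=\bigl(N-(N\cdot v)v\bigr)/\lambda$. Enforcing $|P|=1$ yields $\lambda=\pm\sqrt{1-(N\cdot v)^2}$, and the maximum (rather than the minimum) of $N\cdot P$ corresponds to the positive root, producing exactly
\[
P=\frac{1}{\sqrt{1-(N\cdot v)^2}}\bigl(N-(N\cdot v)v\bigr)=\Psi_N(D\widetilde{\Phi}(\theta)).
\]

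Finally, it must be verified that $\Psi_N(D\widetilde{\Phi}(\theta))$ is well-defined, i.e., that $D\widetilde{\Phi}(\theta)\neq\pm N$. The stipulation $D\widetilde{\Phi}(\theta)\in S^{n+1}_{N,+}$ in Definition \ref{spherical dual} rules out $-N$; on the other hand, $D\widetilde{\Phi}(\theta)=N$ would, via the relation $\widetilde{\Phi}(\theta)\cdot D\widetilde{\Phi}(\theta)=0$, force $\widetilde{\Phi}(\theta)\cdot N=0$, contradicting $\widetilde{\Phi}(S^n)\subset S^{n+1}_{N,+}$.

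There is no serious obstacle here: once the dual hyperplane description is in place, the proposition is a piece of elementary spherical linear algebra. The only mildly delicate point is the sign choice for $\lambda$, which distinguishes the maximum of $N\cdot P$ (the nearest point to $N$, lying in $S^{n+1}_{N,+}$) from the minimum (the nearest point to $-N$).
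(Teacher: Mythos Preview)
Your argument is correct. The reduction of the tangent great hypersphere to the hyperplane $\{P\cdot D\widetilde{\Phi}(\theta)=0\}$ is exactly what Definition~\ref{spherical dual} provides, and the Lagrange-multiplier computation is clean; the sign discussion for $\lambda$ and the well-definedness check ($D\widetilde{\Phi}(\theta)\neq\pm N$) are both handled properly.

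As for comparison: the paper does not give its own proof of Proposition~\ref{proposition 2}. It is quoted from \cite{nishimura}, with the remark that the argument there, originally stated for spherical pedal curves, extends verbatim to spherical pedal hypersurfaces. Your write-up therefore supplies a self-contained proof where the paper only cites one. The approach you take---characterising the nearest point on a great hypersphere by maximising the inner product with $N$ under linear constraints---is essentially the same elementary spherical linear algebra that underlies the four listed properties of $\Psi_N$ immediately following the proposition (in particular $P\cdot\Psi_N(P)=0$, $\Psi_N(P)\in\mathbb{R}N+\mathbb{R}P$, and $N\cdot\Psi_N(P)>0$); indeed, those three properties alone already force $\Psi_N(D\widetilde{\Phi}(\theta))$ to be the unique point of the great hypersphere closest to $N$, which is another way to phrase your computation without invoking Lagrange multipliers explicitly.
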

In \cite{nishimura}, Proposition \ref{proposition 2} has been proved for 
spherical pedal curves.    However, the proof given in \cite{nishimura} 
works well also for spherical pedal 
hypersurfaces.     
The mapping $\Psi_N$ has the following characteristic properties.      
\begin{enumerate}
\item For any $P\in S^{n+1}-\{\pm N\}$, 
the equality $P\cdot \Psi_N(P)=0$ holds,    
\item for any $P\in S^{n+1}-\{\pm N\}$, 
the property $\Psi_N(P)\in \mathbb{R}N+\mathbb{R}P$ holds,     
\item for any $P\in S^{n+1}-\{\pm N\}$, 
the property $N\cdot \Psi_N(P)>0$ holds,      
\item the restriction 
$\Psi_N|_{S^{n+1}_{N,+}-\{N\}}: S^{n+1}_{N,+}-\{N\}\to 
S^{n+1}_{N,+}-\{N\}$ is a $C^\infty$ diffeomorphism.   
\end{enumerate}
By these properties, the mapping $\Psi_N$ is called 
the {\it spherical blow-up relative to} $N$.   
The mapping $\Psi_N$ is quite useful for studying 
many topics related to perpendicularity.    
For instance, it was used for studying singularities of 
spherical pedal curves in \cite{nishimura, nishimura2}, 
for studying spherical pedal unfoldings in \cite{nishimura3}, 
for studying hedgehogs in \cite{nishimurasakemi}, 
for studying (spherical) Wulff shapes 
in \cite{nishimurasakemi2, hannishimura1, hannishimura2}, 
and for studying 
the aperture of plane curves in \cite{kagatsumenishimura}. 
There is also a hyperbolic version of $\Psi_N$ (\cite{izumiyatari}).  
By the above properties, the following clearly holds: 
\begin{lemma}[\cite{kagatsumenishimura}]
The mapping $Id^{-1}\circ \alpha_N\circ \Psi_N\circ \alpha_N^{-1}\circ Id$ 
is exactly the inversion  
\mbox{\rm inv :}\;$\mathbb{R}^{n+1}- \{{\bf 0}\}
\to \mathbb{R}^{n+1}- \{{\bf 0}\}$.   
\label{inversion}
\end{lemma}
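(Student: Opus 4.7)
The plan is to verify the equality by directly chasing an arbitrary $x\in \R^{n+1}-\{{\bf 0}\}$ through the four constituent maps. Because each map has an explicit closed-form definition, the argument is essentially a bookkeeping computation with normalization factors rather than a conceptual one. Writing $r=|x|>0$, I would keep the vector $x$ itself rather than factoring it as $r\theta$, since the expected final image $-x/|x|^2$ is easier to recognize as $\mbox{\rm inv}(\theta,r)=(-\theta,1/r)$ only at the very end.

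First I would compute $P:=\alpha_N^{-1}(Id(x))=\alpha_N^{-1}(x,1)$. By definition of $\alpha_N$, the point $P$ is the unique point of $S^{n+1}_{N,+}$ lying on the ray through $(x,1)$ from the origin of $\R^{n+2}$, so
$$P=\frac{(x,1)}{\sqrt{r^2+1}},\qquad N\cdot P=\frac{1}{\sqrt{r^2+1}},\qquad 1-(N\cdot P)^2=\frac{r^2}{r^2+1}.$$
Substituting into the defining formula for $\Psi_N$, and first observing that
$$N-(N\cdot P)P=\left(-\frac{x}{r^2+1},\,\frac{r^2}{r^2+1}\right),$$
I would obtain
$$\Psi_N(P)=\frac{\sqrt{r^2+1}}{r}\left(-\frac{x}{r^2+1},\,\frac{r^2}{r^2+1}\right)=\left(-\frac{x}{r\sqrt{r^2+1}},\,\frac{r}{\sqrt{r^2+1}}\right).$$
As a sanity check, the squared norm of this vector is $1/(r^2+1)+r^2/(r^2+1)=1$, confirming it lies on $S^{n+1}$.

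To finish, I would apply $\alpha_N$, which divides the first $n+1$ coordinates by the last coordinate $r/\sqrt{r^2+1}$, yielding $(-x/r^2,1)$, and then strip the final coordinate by $Id^{-1}$ to get $-x/r^2$. Passing to polar coordinates $x=r\theta$ with $\theta\in S^n$, this point equals $-\theta/r$, which matches the definition $\mbox{\rm inv}(\theta,r)=(-\theta,1/r)$ exactly. The only real pitfall is mishandling the factor $\sqrt{r^2+1}$ when it is pulled inside and then cancelled in the expression for $\Psi_N(P)$; carried out in the order above, the telescoping is clean and the proof occupies only a few lines.
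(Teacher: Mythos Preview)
Your computation is correct. The paper does not actually supply a proof of this lemma: it cites \cite{kagatsumenishimura} and merely remarks that the statement ``clearly holds'' from the four listed characterizing properties of $\Psi_N$ (namely $P\cdot\Psi_N(P)=0$, $\Psi_N(P)\in\mathbb{R}N+\mathbb{R}P$, $N\cdot\Psi_N(P)>0$, and the diffeomorphism property). Those properties do yield a slightly more conceptual argument---writing $\Psi_N(P)=(ax,c)$ in the $2$-plane $\mathbb{R}N+\mathbb{R}P$ and using orthogonality to $P\parallel(x,1)$ forces $a=-c/r^2$, so $\alpha_N$ sends $\Psi_N(P)$ to $(-x/r^2,1)$ without ever computing $\Psi_N(P)$ explicitly---but your direct substitution into the closed-form definition of $\Psi_N$ is equally valid and equally short. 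Either route is fine here.
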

\subsection{Spherical polar sets}\label{subsection 2.4}
In the Euclidean space $\mathbb{R}^{n+1}$, 
the notion of polar set seems to be  
relatively common (for instance, see \cite{matousek}).     
On the other hand, the notion of spherical polar set 
seems to be less common.   
Since the notion of spherical polar set plays an important role in this paper, 
in this subsection, properties of spherical polar sets in $S^{n+1}$ 
are quickly reviewed.   
\par 
For any point $P$ of $S^{n+1}$, we let $H(P)$ be the following set:
\[
H(P)=\{Q\in S^{n+1}\; |\; P\cdot Q\ge 0\}.
\]
\begin{definition}
{\rm 
Let $X$ be a subset of $S^{n+1}$.   Then, the set 
\[
\bigcap_{P\in X}H(P)
\] 
is called the {\it spherical polar set of} $X$ and is denoted by $X^\circ$.   
}
\end{definition}  
By definition, it is clear that $X^\circ$ 
is closed 
for any $X\subset S^{n+1}$.      
\begin{lemma}[\cite{nishimurasakemi2}]\label{lemma 2.1}
Let $X, Y$ be subsets of $S^{n+1}$.    
Suppose that the inclusion $X\subset Y$ holds.   
Then, the inclusion $Y^\circ \subset X^\circ$ holds.   
\end{lemma}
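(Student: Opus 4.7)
The plan is to unwind the definitions directly. The spherical polar set $Z^\circ$ is defined as the intersection $\bigcap_{P \in Z} H(P)$, where the indexing set is $Z$ itself. Since intersection is order-reversing in the indexing set (shrinking the index set can only enlarge the intersection), the conclusion $Y^\circ \subset X^\circ$ should follow immediately from $X \subset Y$ without invoking any geometric structure of $S^{n+1}$ beyond the definition of $H(P)$.

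Concretely, I would take an arbitrary point $Q \in Y^\circ$ and show $Q \in X^\circ$. By definition of $Y^\circ$, the point $Q$ belongs to $H(P)$ for every $P \in Y$, which is to say $P \cdot Q \ge 0$ for every $P \in Y$. Because $X \subset Y$, this inequality holds a fortiori for every $P \in X$, so $Q \in H(P)$ for all $P \in X$, and hence $Q \in \bigcap_{P \in X} H(P) = X^\circ$. Since $Q \in Y^\circ$ was arbitrary, the inclusion $Y^\circ \subset X^\circ$ is established.

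There is essentially no obstacle: the lemma is a purely formal monotonicity property of the polar operator, and nothing about the spherical geometry, about $H(P)$ being a closed hemisphere, or about convexity is needed. The only subtlety worth flagging is the reversal of the inclusion, which is easy to keep straight once one notices that $P$ appears as the \emph{label} of the sets being intersected, not as an element of those sets; enlarging the label set produces more constraints on $Q$ and thus a smaller polar.
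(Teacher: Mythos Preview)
Your proof is correct; the argument that polar is order-reversing because intersecting over a larger index set yields a smaller set is exactly the right one. The paper does not supply its own proof of this lemma, citing \cite{nishimurasakemi2} instead, so there is nothing to compare against.
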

\begin{lemma}[\cite{nishimurasakemi2}]\label{lemma 2.2}
For any subset $X$ of $S^{n+1}$, 
the inclusion $X\subset X^{\circ\circ}$ holds.  
\end{lemma}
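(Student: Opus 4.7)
The plan is to unwind the two nested definitions of spherical polar and observe that the symmetry of the scalar product on $\mathbb{R}^{n+2}$ makes the inclusion essentially automatic, in complete analogy with the Euclidean bipolar inclusion.

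First I would fix an arbitrary point $P \in X$ and aim to verify that $P$ lies in $X^{\circ\circ}=(X^\circ)^\circ$. By unwinding the definition, this membership is equivalent to the assertion that $Q \cdot P \ge 0$ for every $Q \in X^\circ$. Next, I would invoke the definition of $X^\circ$ itself: by construction, any $Q \in X^\circ$ belongs to $H(P')$ for every $P' \in X$, meaning $P' \cdot Q \ge 0$ for all such $P'$. Specializing this universally quantified statement to the particular element $P' = P \in X$ yields $P \cdot Q \ge 0$, and by symmetry of the scalar product this coincides with $Q \cdot P \ge 0$, which is exactly what was needed.

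Since $P$ was arbitrary in $X$, I would conclude $X \subset X^{\circ\circ}$. There is no real obstacle here: the argument is purely formal and relies only on (i) the definition of $X^\circ$ as the intersection of the hemispheres $H(P)$ over $P \in X$, and (ii) the commutativity of the scalar product. In particular, no convexity, closedness, or regularity hypothesis on $X$ is needed, which is consistent with the fact that the lemma is stated for an arbitrary subset of $S^{n+1}$.
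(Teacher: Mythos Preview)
Your argument is correct: unwinding the definition of $X^\circ$ and using the symmetry of the scalar product is exactly the standard verification, and no further hypothesis on $X$ is required. Note that the paper itself does not supply a proof of this lemma but merely cites it from \cite{nishimurasakemi2}; your direct argument is the natural one and there is nothing to compare against here.
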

\begin{definition}\label{definition 2.1}
{\rm 
A subset $X\subset S^{n+1}$ is said to be 
{\it hemispherical} if there exists a point $P\in S^{n+1}$ such that 
$H(P)\cap X=\emptyset$.   
}
\end{definition}
Let $X$ be a hemispherical subset of $S^{n+1}$.   
Then, for any $P, Q\in X$, $PQ$ stands for the following arc:   
\[
PQ=\left\{\left.\frac{(1-t)P+tQ}{||(1-t)P+tQ||}\in S^{n+1}\; \right|\; 
0\le t\le 1\right\}.  
\] 
Notice that $||(1-t)P+tQ||\ne 0$ for any $P, Q\in X$ 
and any $t\in [0,1]$ if $X\subset S^{n+1}$ is 
hemispherical.       
\begin{definition}\label{definition 2.2}
{\rm 
\begin{enumerate}
\item[(1)]\quad  
A hemispherical subset $X\subset S^{n+1}$ is said to be 
{\it spherical convex} if $PQ\subset X$ for any $P, Q\in X$.     
\item[(2)]\quad 
A hemispherical subset $X\subset S^{n+1}$ is said to be 
{\it strictly spherical convex} if $PQ-\{P,Q\}$ is a subset 
of the set consisting of interior points of $X$ for any $P, Q\in X$.     
\end{enumerate}
}
\end{definition}
Notice that $X^\circ$ is spherical convex 
if $X$ is hemispherical and has an interior point.   
However, in general, 
$X^\circ$ is not necessarily spherical convex 
even if $X$ is hemispherical (for instance if 
$X=\{P\}$ then $X^\circ=H(P)$ is not spherical convex).          
\begin{lemma}[\cite{nishimurasakemi2}]\label{lemma 2.3}
Let $X_\lambda\subset S^{n+1}$ be a spherical convex subset 
for any $\lambda\in \Lambda$ .   
Then, the intersection 
$\cap_{\lambda\in \Lambda}X_\lambda$ is spherical convex.    
\end{lemma}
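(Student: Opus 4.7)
The plan is to verify the two conditions that together constitute spherical convexity, namely the hemispherical property and closure under spherical arcs, for the intersection $X = \bigcap_{\lambda \in \Lambda} X_\lambda$.

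First I would handle the hemispherical condition. By Definition \ref{definition 2.1}, being hemispherical means there exists some point $P \in S^{n+1}$ with $H(P) \cap X = \emptyset$. The key observation is monotonicity: for any fixed $\lambda_0 \in \Lambda$, the inclusion $X \subset X_{\lambda_0}$ holds. Since $X_{\lambda_0}$ is (by hypothesis) spherical convex, it is hemispherical, so there exists $P_0 \in S^{n+1}$ with $H(P_0) \cap X_{\lambda_0} = \emptyset$. Then $H(P_0) \cap X \subset H(P_0) \cap X_{\lambda_0} = \emptyset$, which proves $X$ is hemispherical. The edge case $\Lambda = \emptyset$ can be dismissed beforehand as not genuinely occurring, and the edge case $X = \emptyset$ is trivially hemispherical (and trivially spherical convex).

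Next I would verify the arc condition. Fix $P, Q \in X$; then $P, Q \in X_\lambda$ for every $\lambda \in \Lambda$. Because $X$ is hemispherical (from the previous step), the arc $PQ$ is well-defined in $S^{n+1}$, and moreover the denominators $\|(1-t)P + tQ\|$ are nonzero (since $P, Q$ lie in the hemispherical set $X_{\lambda_0}$). Applying the spherical convexity of each individual $X_\lambda$, we get $PQ \subset X_\lambda$ for every $\lambda$, whence $PQ \subset \bigcap_{\lambda \in \Lambda} X_\lambda = X$. This finishes the verification.

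I do not expect a serious obstacle here; the proof is essentially the standard "intersection of convex sets is convex" argument transplanted to the sphere. The only point that requires a moment's care is ensuring that the hemispherical condition, which is not preserved under arbitrary set operations in general, is inherited by the intersection, and this is precisely what the monotonicity reduction above accomplishes. The arc-closure step is then immediate from the definitions.
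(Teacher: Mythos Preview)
Your argument is correct and is exactly the standard verification one would give: the intersection inherits the hemispherical property from any single $X_{\lambda_0}$ via the inclusion $X\subset X_{\lambda_0}$, and the arc condition then follows pointwise since $PQ\subset X_\lambda$ for every $\lambda$. Your handling of the edge cases ($\Lambda=\emptyset$, $X=\emptyset$) is also appropriate.

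Note, however, that the paper does not actually supply a proof of this lemma: it is simply quoted from \cite{nishimurasakemi2} in the preliminaries section and stated without argument. So there is no in-paper proof to compare your approach against. Your write-up would serve perfectly well as the missing justification.
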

\begin{definition}\label{definition 2.3}
{\rm Let $X$ be a hemispherical subset of $S^{n+1}$.     
Then, 
the following set is called the {\it spherical convex hull} of $X$ 
and is denoted by 
$\mbox{\rm s-conv}(X)$.   
$$
\mbox{\rm s-conv}(X)= 
\left\{\left.
\frac{\sum_{i=1}^k t_iP_i}{||\sum_{i=1}^kt_iP_i||}\;\right|\; 
P_i\in X,\; \sum_{i=1}^kt_i=1,\; t_i\ge 0,\; k\in \mathbb{N}
\right\}.
$$ 
}
\end{definition}
It is clear that $\mbox{\rm s-conv}(X)=X$ if $X$ is spherical convex.     
More generally, we have the following:   
\begin{lemma}[\cite{nishimurasakemi2}]\label{lemma 2.4}
For any hemispherical subset $X$, 
the spherical convex hull of $X$ is 
the smallest spherical convex set containing $X$. 
\end{lemma}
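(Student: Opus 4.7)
The plan is to verify the three required properties: that $\text{s-conv}(X)$ contains $X$, that $\text{s-conv}(X)$ is itself spherical convex, and that any spherical convex set containing $X$ must also contain $\text{s-conv}(X)$. The first is immediate from Definition \ref{definition 2.3} by choosing $k=1$ and $t_1=1$. Before anything else, however, I would note that the definition only makes sense if $\|\sum_i t_i P_i\|\neq 0$ for every admissible combination; since $X$ is hemispherical, there is a point $P\in S^{n+1}$ with $P\cdot Q<0$ for all $Q\in X$, and then $P\cdot\sum t_iP_i=\sum t_i(P\cdot P_i)<0$ whenever the $t_i$ are nonnegative with at least one positive, so the vector is nonzero. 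The same estimate shows $\text{s-conv}(X)$ is hemispherical (the same $P$ works), which is needed to speak of arcs inside $\text{s-conv}(X)$ at all.

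For spherical convexity of $\text{s-conv}(X)$, I would take two points
\[
P=\frac{\sum_{i=1}^k s_iP_i}{\|\sum_{i=1}^k s_iP_i\|},\qquad Q=\frac{\sum_{j=1}^\ell t_jQ_j}{\|\sum_{j=1}^\ell t_jQ_j\|}
\]
in $\text{s-conv}(X)$ and a parameter $u\in[0,1]$, set $\lambda=\|\sum s_iP_i\|$, $\mu=\|\sum t_jQ_j\|$, and clear the denominators to rewrite $(1-u)P+uQ$ as a positive scalar multiple of
\[
\mu(1-u)\sum_i s_iP_i+\lambda u\sum_j t_jQ_j.
\]
The coefficients $\mu(1-u)s_i$ and $\lambda u t_j$ are nonnegative and sum to $\mu(1-u)+\lambda u>0$, so after dividing by this sum one obtains a bona fide convex combination of elements of $X$; the normalized version is precisely the arc point, so it lies in $\text{s-conv}(X)$.

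Finally, for minimality, let $Y$ be any spherical convex subset of $S^{n+1}$ with $X\subset Y$. I would prove by induction on $k$ that every point of the form $(\sum_{i=1}^k t_iP_i)/\|\sum_{i=1}^k t_iP_i\|$ with $P_i\in X$ and $\sum t_i=1$, $t_i\ge 0$ lies in $Y$. The base case $k=1$ is immediate. For the step, discard indices with $t_i=0$, and if $t_{k+1}=1$ the point is $P_{k+1}\in X\subset Y$. Otherwise, set $s_i=t_i/(1-t_{k+1})$ for $i\le k$ and let $R=(\sum_{i=1}^k s_iP_i)/\|\sum_{i=1}^k s_iP_i\|$, which belongs to $Y$ by induction. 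A direct computation shows the point in question lies on the arc $RP_{k+1}$, namely at the parameter value
\[
u=\frac{t_{k+1}}{t_{k+1}+(1-t_{k+1})\,\|\textstyle\sum_{i=1}^k s_iP_i\|}\in(0,1],
\]
and spherical convexity of $Y$ then places it in $Y$.

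The only subtle point, and therefore the main obstacle, is keeping the normalizations honest: one must verify that each denominator is nonzero (handled once and for all by hemisphericality of $X$) and that the arc from $R$ to $P_{k+1}$ really does pass through the target, which is where the explicit value of $u$ above is needed. Everything else is essentially bookkeeping.
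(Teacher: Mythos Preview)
The paper does not supply its own proof of this lemma; it is quoted from \cite{nishimurasakemi2} as a known fact, so there is no in-paper argument to compare against. Your proof is correct and follows the natural route: you first use hemisphericality to guarantee all normalizations are well defined and that $\mbox{s-conv}(X)$ is itself hemispherical, then close $\mbox{s-conv}(X)$ under arcs by clearing denominators, and finally obtain minimality by induction on the length of the convex combination. Each step checks out; in particular your explicit parameter $u$ does place the target point on the arc $RP_{k+1}$, since $(1-t_{k+1})\sum_i s_iP_i + t_{k+1}P_{k+1}$ is a positive multiple of $(1-u)R + uP_{k+1}$.
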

\begin{definition}\label{definition 2.4}
{\rm 
Let $\{P_1, \ldots, P_k\}$ be a hemispherical finite subset of $S^{n+1}$.   
Suppose that $\mbox{\rm s-conv}(\{P_1, \ldots, P_k\})$ 
has an interior point.    
Then, $\mbox{\rm s-conv}(\{P_1, \ldots, P_k\})$ is called 
the {\it spherical polytope} 
generated by $P_1, \ldots, P_k$.      
}
\end{definition}
\begin{proposition}[\cite{gaohugs, nishimurasakemi2}]\label{proposition 2.1}
For any closed hemispherical subset $X\subset S^{n+1}$, 
the equality $\mbox{\rm s-conv}(X)
=(\mbox{\rm s-conv}(X))^{\circ\circ}$ holds.
\end{proposition}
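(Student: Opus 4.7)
The inclusion $\mbox{\rm s-conv}(X) \subset (\mbox{\rm s-conv}(X))^{\circ\circ}$ is immediate from Lemma \ref{lemma 2.2} applied with $Y = \mbox{\rm s-conv}(X)$, so the substantive content is the reverse inclusion. My plan is to prove the contrapositive: given $Q_0 \in S^{n+1} \setminus \mbox{\rm s-conv}(X)$, produce a $P_0 \in (\mbox{\rm s-conv}(X))^{\circ}$ with $P_0 \cdot Q_0 < 0$, so that $Q_0 \notin H(P_0)$ and hence $Q_0 \notin (\mbox{\rm s-conv}(X))^{\circ\circ}$. This is a spherical Hahn--Banach separation, and the natural route is to transport everything to Euclidean space via the central projection $\alpha_N$ of Subsection \ref{subsection 2.3}.

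Since $X$ is closed in $S^{n+1}$ it is compact, and since $X$ is hemispherical there exists $P^\ast$ with $H(P^\ast)\cap X=\emptyset$, which means $X$ lies in the open hemisphere opposite to $P^\ast$; after a rotation of $S^{n+1}$ I may therefore assume $X\subset S^{n+1}_{N,+}$. Under $\alpha_N$, great hyperspheres of $S^{n+1}$ correspond to affine hyperplanes of $\mathbb{R}^{n+1}\times\{1\}$, arcs lying in $S^{n+1}_{N,+}$ correspond to straight line segments, and so spherical convex combinations correspond to Euclidean convex combinations. Consequently $\alpha_N(\mbox{\rm s-conv}(X))$ equals the Euclidean convex hull of the compact set $\alpha_N(X)$; in particular it is compact and convex, which forces $\mbox{\rm s-conv}(X)$ itself to be compact, so strict separation is available.

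If $Q_0 \in S^{n+1}_{N,+}$, the classical Hahn--Banach theorem strictly separates the point $\alpha_N(Q_0)$ from the compact convex set $\alpha_N(\mbox{\rm s-conv}(X))$ by an affine hyperplane; pulling this hyperplane back through $\alpha_N^{-1}$ gives a great hypersphere of $S^{n+1}$, and its unit normal $P_0$ (chosen on the appropriate side) satisfies $P_0\cdot Q'\ge 0$ for every $Q'\in\mbox{\rm s-conv}(X)$ together with $P_0\cdot Q_0<0$, which is exactly the required element of $(\mbox{\rm s-conv}(X))^{\circ}$. If instead $Q_0\notin S^{n+1}_{N,+}$, I would first perturb the pole: by compactness of $X$ and hemisphericity, a small rotation of $N$ produces a new pole $N'$ for which both $X$ and $Q_0$ lie in $S^{n+1}_{N',+}$, and then the previous argument applies with $\alpha_{N'}$.

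The main obstacle I anticipate is the verification that $\alpha_N$ really does match spherical convex hulls inside $S^{n+1}_{N,+}$ with Euclidean convex hulls of the projected sets, since this is the engine converting spherical separation into Euclidean separation; it rests on the elementary but slightly delicate fact that $\alpha_N$ preserves the family of affine hyperplanes, so that a spherical arc from $P$ to $Q$ inside $S^{n+1}_{N,+}$ projects onto the straight segment from $\alpha_N(P)$ to $\alpha_N(Q)$. Once this correspondence is checked and the auxiliary case $Q_0\notin S^{n+1}_{N,+}$ is dealt with by rotating the pole, the Euclidean Hahn--Banach separation supplies the required $P_0$, completing the proof.
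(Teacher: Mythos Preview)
The paper does not supply a proof of Proposition~\ref{proposition 2.1}: it is quoted as a known result from \cite{gaohugs, nishimurasakemi2}, so there is no in-paper argument to compare against. I can therefore only comment on the internal soundness of your outline.

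Your strategy is correct in spirit, and the main case $Q_0\in S^{n+1}_{N,+}$ goes through exactly as you describe: the central projection $\alpha_N$ carries spherical convex combinations inside the open hemisphere to Euclidean convex combinations, the image $\alpha_N(\mbox{s-conv}(X))$ is the Euclidean convex hull of the compact set $\alpha_N(X)$ and hence compact, and strict Hahn--Banach separation pulls back via the homogeneous equation $(a,-c)\cdot Q=0$ to give the required $P_0\in(\mbox{s-conv}(X))^\circ$ with $P_0\cdot Q_0<0$.

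The genuine gap is your handling of $Q_0\notin S^{n+1}_{N,+}$. You assert that ``a small rotation of $N$ produces a new pole $N'$ for which both $X$ and $Q_0$ lie in $S^{n+1}_{N',+}$'', but this is false in general: take $X=\{N\}$ (which is closed and hemispherical) and $Q_0=-N$. No rotation, small or large, places an antipodal pair inside a single open hemisphere. The remedy is easy and avoids any rotation. If $N\cdot Q_0<0$, then $N$ itself lies in $(\mbox{s-conv}(X))^\circ$ (since $\mbox{s-conv}(X)\subset S^{n+1}_{N,+}$) and $N\cdot Q_0<0$, so $P_0=N$ works. If $N\cdot Q_0=0$, use compactness of $\mbox{s-conv}(X)$ in the open hemisphere to get $\varepsilon>0$ with $N\cdot Q\ge\varepsilon$ for all $Q\in\mbox{s-conv}(X)$, and set $P_0=(N-tQ_0)/\|N-tQ_0\|$ for any $0<t<\varepsilon$; then $P_0\cdot Q_0=-t/\|N-tQ_0\|<0$ while $P_0\cdot Q\ge(\varepsilon-t)/\|N-tQ_0\|>0$. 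With this correction the argument is complete.
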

Notice that for any closed hemispherical subset 
$X\subset S^{n+1}$, $\mbox{\rm s-conv}(X)$, too, is 
closed and hemispherical.   
Notice also that by Lemma \ref{lemma 2.2}, 
for any subset $X\subset S^{n+1}$, the inclusion 
$X\subset X^{\circ\circ}$ always holds.   However, 
the inverse inclusion 
$X\supset X^{\circ\circ}$ does not hold in general 
even if $X$ is closed and hemispherical.  
\begin{lemma}[\cite{nishimurasakemi2}]
For any hemispherical finite subset $X=\{P_1, \ldots, P_k\}\subset S^{n+1}$, 
the following holds:
$$
\left\{\left.
\frac{\sum_{i=1}^k t_iP_i}{||\sum_{i=1}^kt_iP_i||}\;\right|\; 
P_i\in X,\; \sum_{i=1}^kt_i=1,\; t_i\ge 0
\right\}^\circ 
= 
H(P_1)\cap \cdots \cap H(P_k).
$$
\label{lemma 2.5}
\end{lemma}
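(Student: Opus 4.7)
The plan is to prove the equality by showing both inclusions between $(\mbox{\rm s-conv}(X))^\circ$, which is precisely the left-hand side by Definitions \ref{definition 2.3} and that of the spherical polar set, and $X^\circ = H(P_1) \cap \cdots \cap H(P_k)$, which is the right-hand side by the very definition of spherical polar set applied to the finite set $X = \{P_1, \ldots, P_k\}$.

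For the inclusion $(\mbox{\rm s-conv}(X))^\circ \subseteq X^\circ$, I would invoke Lemma \ref{lemma 2.1} directly. Taking $t_i = 1$ and $t_j = 0$ for $j \neq i$ in the defining expression for $\mbox{\rm s-conv}(X)$ shows that each $P_i$ lies in $\mbox{\rm s-conv}(X)$, hence $X \subseteq \mbox{\rm s-conv}(X)$, and Lemma \ref{lemma 2.1} reverses this to give the desired inclusion of polars.

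For the reverse inclusion $X^\circ \subseteq (\mbox{\rm s-conv}(X))^\circ$, I would argue directly from the scalar product. Let $Q \in X^\circ$, so that $Q \cdot P_i \geq 0$ for every $i \in \{1, \ldots, k\}$. For any point
$$P = \frac{\sum_{i=1}^k t_i P_i}{\left\|\sum_{i=1}^k t_i P_i\right\|} \in \mbox{\rm s-conv}(X),$$
one computes
$$Q \cdot P = \frac{\sum_{i=1}^k t_i (Q \cdot P_i)}{\left\|\sum_{i=1}^k t_i P_i\right\|} \geq 0,$$
since the numerator is a nonnegative linear combination of nonnegative numbers. Hence $Q \in H(P)$ for every $P \in \mbox{\rm s-conv}(X)$, which is exactly $Q \in (\mbox{\rm s-conv}(X))^\circ$.

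The only subtlety to address is the well-definedness of the normalization, that is, $\left\|\sum_{i=1}^k t_i P_i\right\| > 0$ whenever $t_i \geq 0$ and $\sum t_i = 1$. This is the main (and only) real obstacle, and it is resolved by the hemispherical assumption: by Definition \ref{definition 2.1} there exists $R \in S^{n+1}$ with $H(R) \cap X = \emptyset$, i.e., $R \cdot P_i < 0$ for every $i$. Then $R \cdot \sum_{i=1}^k t_i P_i = \sum_{i=1}^k t_i (R \cdot P_i) < 0$, which forces $\sum_{i=1}^k t_i P_i \neq {\bf 0}$, so the denominator above is strictly positive and the computation is legitimate.
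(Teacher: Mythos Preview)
Your argument is correct. The two inclusions are handled cleanly: $X\subseteq\mbox{\rm s-conv}(X)$ together with Lemma~\ref{lemma 2.1} gives one direction, and the linearity of the scalar product gives the other. The check that $\sum_{i=1}^k t_iP_i\ne{\bf 0}$ via the hemispherical hypothesis is exactly what is needed; you might make explicit that the strict inequality $R\cdot\sum_i t_iP_i<0$ uses that at least one $t_i$ is positive (forced by $\sum_i t_i=1$), but this is a triviality.

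As for comparison with the paper: the paper does not prove Lemma~\ref{lemma 2.5} at all. It is quoted from \cite{nishimurasakemi2} (where it is attributed to Maehara) and used as a black box. Your self-contained argument is therefore more than the paper supplies, and is the natural elementary proof.
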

\noindent 
Lemma \ref{lemma 2.5} is called {\it Maehara's lemma}.  
\subsection{Caustics and symmetry sets}\label{subsection 2.5}
Let $\Phi:S^n\to\R^{n+1}$ be a $C^\infty$ embedding. 
Consider the family of functions $F:\R^{n+1}\times S^n\to \R$ defined by
\[
F(v,\theta)=\dfrac{1}{2}||\Phi(\theta)-v||^2.
\]
Notice that $F$ may be regarded as {\color{black}the} map 
from $\R^{n+1}$ to $C^\infty(S^n,\R)$ 
which maps each $v\in\R^{n+1}$ to the function 
$f_v(\theta)=F(v,\theta)\in C^\infty(S^n,\R)$.    
The set consisting of vectors $v$ for which $f_v(\theta)$ has 
a degenerate critical point form the \emph{Caustic} of $\Phi$, 
denoted by $Caust(\Phi)$ (or $Caust(\Phi(S^n))$).     
For details on caustics, see for instance 
\cite{arnold, arnolddynamical8, arnold-guseinzade-varchenko, 
izumiya1, izumiya-takahashi}.  
The set consisting of vectors $v$ for which $f_v(\theta)$ has 
a multiple critical value form the \emph{Symmetry Set} of $\Phi$, 
denoted by $Sym(\Phi)$ (or $Sym(\Phi(S^n))$). 
For details on symmetry sets, see for instance 
\cite{bruce-giblin1, bruce-giblin2, bruce-giblin-gibson}.     
These two sets provide the set consisting of vectors $v$ 
at which the function  $f_v(\theta)$ is not stable.    
%
\subsection{Spherical Wulff shapes, spherical caustics 
and spherical symmetry sets}\label{subsection 2.6}
Let $\mathcal{W}_\gamma$ be a Wulff shape.    
Then, the image of $\mathcal{W}_\gamma$ 
by $\alpha_N^{-1}\circ Id: \mathbb{R}^{n+1}\to S^{n+1}_{N,+}$ is 
called the {\it spherical Wulff shape} 
associated with $\mathcal{W}_\gamma$   
and is denoted by $\widetilde{\mathcal{W}}_\gamma$.   
By using the spherical blow-up and the spherical polar set operation, 
any spherical Wulff shape $\widetilde{\mathcal{W}}_\gamma$ can be 
characterized as follows:   
\[
\widetilde{\mathcal{W}}_\gamma 
= 
\left(\Psi_N\circ \alpha_N^{-1}\circ Id
\left(\mbox{\rm graph}(\gamma)\right)\right)^\circ.   
\] 
For a spherical Wulff shape $\widetilde{\mathcal{W}}_\gamma$, 
the spherical polar set 
$\left(\widetilde{\mathcal{W}}_\gamma\right)^\circ$ is called the 
\emph{spherical dual} of $\widetilde{\mathcal{W}}_\gamma$, 
and is denoted by 
$\mathcal{D}\widetilde{\mathcal{W}}_\gamma$.   
\[
\mathcal{D}\widetilde{\mathcal{W}}_\gamma
= 
\left(\widetilde{\mathcal{W}}_\gamma\right)^\circ.   
\]
{\color{black} 
\begin{lemma}\label{lemma 2.7}
Let $\gamma:S^n\to \mathbb{R}_+$ be a convex integrand and 
let $\delta: S^n\to \mathbb{R}_+$ be the dual of $\gamma$.    
Then, the following hold:   
\begin{enumerate}
\item[(1)]\quad $\mathcal{D}\mathcal{W}_\gamma=
Id^{-1}\circ\alpha_N(\mathcal{D}\widetilde{\mathcal{W}}_\gamma)$.    
\item[(2)]\quad $\mathcal{DDW}_\gamma=\mathcal{W}_\gamma$.   
\item[(3)]\quad The dual of $\delta$ is $\gamma$.   
\end{enumerate}
\end{lemma}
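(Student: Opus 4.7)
The plan is to reduce everything to spherical polar calculations in $S^{n+1}$, using the characterization
$\widetilde{\mathcal{W}}_\gamma = (\Psi_N\circ \alpha_N^{-1}\circ Id(\text{graph}(\gamma)))^\circ$ of the spherical Wulff shape,
Lemma \ref{inversion} (which identifies the Euclidean inversion with $Id^{-1}\circ \alpha_N\circ \Psi_N\circ \alpha_N^{-1}\circ Id$),
and the involutivity supplied by Proposition \ref{proposition 2.1}.

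For (1), I would first apply the above characterization with $\delta$ in place of $\gamma$ to get
$\widetilde{\mathcal{W}}_\delta = (\Psi_N\circ \alpha_N^{-1}\circ Id(\text{graph}(\delta)))^\circ$.
Combining Lemma \ref{inversion} with the defining relation $\text{inv}(\text{graph}(\delta)) = \partial \mathcal{W}_\gamma$ of Definition \ref{definition 1.2} identifies $\Psi_N\circ \alpha_N^{-1}\circ Id(\text{graph}(\delta))$ with $\alpha_N^{-1}\circ Id(\partial \mathcal{W}_\gamma) = \partial \widetilde{\mathcal{W}}_\gamma$,
so $\widetilde{\mathcal{W}}_\delta = (\partial \widetilde{\mathcal{W}}_\gamma)^\circ$.
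To conclude, I would show $(\partial \widetilde{\mathcal{W}}_\gamma)^\circ = (\widetilde{\mathcal{W}}_\gamma)^\circ = \mathcal{D}\widetilde{\mathcal{W}}_\gamma$:
the inclusion $\supset$ is Lemma \ref{lemma 2.1}, whereas $\subset$ follows from spherical convexity of $\widetilde{\mathcal{W}}_\gamma$ together with $N$ being an interior point, since then any $P$ in $\widetilde{\mathcal{W}}_\gamma$ lies on some arc $P_1P_2$ with $P_1,P_2 \in \partial \widetilde{\mathcal{W}}_\gamma$, and $P\cdot Q$ is a non-negative combination of $P_1\cdot Q$ and $P_2\cdot Q$.
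Pushing forward through $Id^{-1}\circ \alpha_N$ and invoking $\mathcal{DW}_\gamma = \mathcal{W}_\delta$ then yields (1).

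For (2), I would iterate (1): the first application gives $\widetilde{\mathcal{W}}_\delta = (\widetilde{\mathcal{W}}_\gamma)^\circ$, and the second (applied to $\delta$, which is itself a convex integrand) gives
$\mathcal{DDW}_\gamma = Id^{-1}\circ\alpha_N((\widetilde{\mathcal{W}}_\delta)^\circ) = Id^{-1}\circ\alpha_N((\widetilde{\mathcal{W}}_\gamma)^{\circ\circ})$.
Since $\widetilde{\mathcal{W}}_\gamma$ is closed, hemispherical, and spherical convex, it equals its own spherical convex hull, so Proposition \ref{proposition 2.1} yields $(\widetilde{\mathcal{W}}_\gamma)^{\circ\circ} = \widetilde{\mathcal{W}}_\gamma$, hence $\mathcal{DDW}_\gamma = \mathcal{W}_\gamma$.

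For (3), let $\delta'$ denote the dual of $\delta$; then $\mathcal{W}_{\delta'} = \mathcal{DW}_\delta = \mathcal{DDW}_\gamma = \mathcal{W}_\gamma$ by (2), and since a convex integrand is uniquely recovered from its Wulff shape as its support function, one concludes $\delta' = \gamma$. The principal obstacle I anticipate is the verification in (1) that $(\partial \widetilde{\mathcal{W}}_\gamma)^\circ = (\widetilde{\mathcal{W}}_\gamma)^\circ$, along with carefully checking the hypotheses (closedness, hemisphericity, spherical convexity) that license Proposition \ref{proposition 2.1} in (2); both depend crucially on $N$ lying in the interior of $\widetilde{\mathcal{W}}_\gamma$, which in turn reflects that the origin is interior to $\mathcal{W}_\gamma$.
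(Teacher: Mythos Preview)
Your proposal is correct and follows the paper's overall architecture, but your treatment of (1) is organized differently and is arguably cleaner. The paper works \emph{from the spherical side to the Euclidean side}: it introduces a sublemma describing $Id^{-1}\circ\alpha_N(H(P)\cap S^{n+1}_{N,+})$ as a Euclidean half-space, then expands $Id^{-1}\circ\alpha_N(\mathcal{D}\widetilde{\mathcal{W}}_\gamma)$ as an intersection of such half-spaces indexed by $\partial\widetilde{\mathcal{W}}_\gamma$ (invoking Maehara's Lemma~\ref{lemma 2.5} to reduce from the full body to its boundary), and recognizes this intersection as $\mathcal{W}_\delta$ from the definition of a Wulff shape. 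You instead stay in $S^{n+1}$ throughout, using the built-in characterization of $\widetilde{\mathcal{W}}_\delta$ as a spherical polar set together with Lemma~\ref{inversion} and Definition~\ref{definition 1.2} to obtain $\widetilde{\mathcal{W}}_\delta=(\partial\widetilde{\mathcal{W}}_\gamma)^\circ$ directly; your arc argument then replaces the paper's appeal to Maehara's lemma for the boundary-versus-body reduction. This avoids the extra sublemma and the somewhat informal use of Lemma~\ref{lemma 2.5} (which is stated only for finite sets), at the cost of relying on the characterization of $\widetilde{\mathcal{W}}_\delta$ as a given. Parts (2) and (3) are essentially identical to the paper's: both hinge on Proposition~\ref{proposition 2.1}, and your support-function remark in (3) is exactly the content of the paper's Proposition~\ref{nishimurasakemi2_taylor} combined with the convex-integrand condition.
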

\proof 
We first show the assertion (1).   
By definition, it is clear that the following sublemma holds.  
\begin{sublemma}\label{sublemma 2.1}
For any $P\in S^{n+1}_{N,+}$, the following equality holds where 
$\theta\in S^n$ and $r\in \mathbb{R}_+$ is defined by 
$(\theta, r)=Id^{-1}\circ \alpha_N\circ\Psi_N(P)$.   
\[
Id^{-1}\circ \alpha_N\left(H(P)\cap S^{n+1}_{N,+}\right)
= 
\{x\in \mathbb{R}^{n+1}\; |\; x\cdot \theta\le r\}.   
\]
\end{sublemma}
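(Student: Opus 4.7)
The plan is to verify the equality by direct coordinate computation, using the explicit formulas for $\alpha_N$, $Id$, and $\Psi_N$. Write $P=(P_1,\dots,P_{n+1},P_{n+2})\in S^{n+1}_{N,+}$, so $N\cdot P = P_{n+2}$, and (after discarding the degenerate case $P=N$) $P_{n+2}\in(0,1)$ so that $\Psi_N(P)$ is defined. Set $P'=(P_1,\dots,P_{n+1})\in\mathbb{R}^{n+1}$; note $\|P'\|=\sqrt{1-P_{n+2}^{2}}$.

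First I would compute $(\theta,r)$ explicitly. Substituting into $\Psi_N(P)=\tfrac{1}{\sqrt{1-P_{n+2}^{2}}}(N-P_{n+2}P)$, the last coordinate of $\Psi_N(P)$ equals $\sqrt{1-P_{n+2}^{2}}$ and the first $n+1$ coordinates equal $-P_{n+2}P'/\sqrt{1-P_{n+2}^{2}}$. Applying $\alpha_N$ (division by the last coordinate) and then $Id^{-1}$ gives
\[
Id^{-1}\!\circ\alpha_N\!\circ\Psi_N(P)\;=\;\frac{-P_{n+2}}{1-P_{n+2}^{2}}\,P',
\]
so in the polar expression $(\theta,r)$ one reads off $\theta=-P'/\sqrt{1-P_{n+2}^{2}}$ and $r=P_{n+2}/\sqrt{1-P_{n+2}^{2}}$.

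Next I would compute the left-hand side. For any $Q\in S^{n+1}_{N,+}$ with $Q_{n+2}>0$, the point $x:=Id^{-1}\!\circ\alpha_N(Q)$ satisfies $Q=(Q_{n+2}x,Q_{n+2})$. The condition $Q\in H(P)$ reads $P'\!\cdot\!(Q_{n+2}x)+P_{n+2}Q_{n+2}\ge 0$, and dividing by $Q_{n+2}>0$ it becomes $P'\!\cdot x\ge -P_{n+2}$, i.e.\ $(-P')\!\cdot x\le P_{n+2}$. Dividing both sides by $\sqrt{1-P_{n+2}^{2}}>0$ yields exactly $x\cdot\theta\le r$, which proves both inclusions at once.

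Aside from handling the excluded point $P=N$ (where $\Psi_N$ is not defined and both sides degenerate), I do not expect any real obstacle: the argument is just bookkeeping of the central-projection formula together with the fact that dividing a linear inequality by a positive scalar $Q_{n+2}$ is an equivalence. The only point worth emphasizing in the write-up is that the sign in $\theta=-P'/\sqrt{1-P_{n+2}^{2}}$ is what makes the identity $Id^{-1}\circ\alpha_N\circ\Psi_N\circ\alpha_N^{-1}\circ Id=\mathrm{inv}$ (Lemma \ref{inversion}) compatible with the orientation convention $\mathrm{inv}(\theta,r)=(-\theta,1/r)$ used to define convex integrands at the beginning of the introduction.
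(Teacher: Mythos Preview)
Your direct coordinate computation is correct and is exactly what the paper means when it writes ``By definition, it is clear that the following sublemma holds'': you have simply written out the verification that the paper omits. Your observation about the degenerate point $P=N$ (where $\Psi_N$ is undefined) is a fair caveat, but otherwise there is nothing to add.
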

We have the following:      
\begin{eqnarray*}
{ } & { } & 
Id^{-1}\circ \alpha_N(\mathcal{D}\widetilde{W}_\gamma) \\ 
{ } & = & 
Id^{-1}\circ \alpha_N
\left(\bigcap_{P\in\widetilde{\mathcal{W}}_\gamma}H(P)\right) \\ 
{ } & = & 
Id^{-1}\circ \alpha_N
\left(\bigcap_{P\in\partial \widetilde{\mathcal{W}}_\gamma}H(P)\right) 
\quad (\mbox{by Lemma \ref{lemma 2.5}}) \\
 { } & = & 
\bigcap_{\theta\in S^n}
Id^{-1}\circ \alpha_N
\left(
H\left(\alpha_N^{-1}\circ Id\left(\theta, \hat{\delta}(\theta)\right)\right) 
\cap S^{n+1}_{N,+}
\right)
\quad (\mbox{since }Id^{-1}\circ \alpha_N\mbox{ is bijective.}) \\ 
 { } & = & 
\bigcap_{\theta\in S^n}
\left\{
x\in \mathbb{R}^{n+1}\; |\; x\cdot \theta\le \delta(\theta)
\right\}
\quad (\mbox{by Lemma \ref{inversion} 
and Sublemma \ref{sublemma 2.1}}) \\ 
 { } & = &  
 \mathcal{W}_\delta=\mathcal{DW}_\gamma.   
\end{eqnarray*}
Hence, the assertion (1) follows.   
\par 
Nextly, we show the assertion (2).    
By Proposition \ref{proposition 2.1}, the following holds:   
\[
\mathcal{DD}\widetilde{\mathcal{W}}_\gamma = 
\left(\widetilde{W}_\gamma\right)^{\circ\circ}
= \widetilde{W}_\gamma.   
\]
Hence, by the assertion (1), we have the following, which proves 
the assertion (2): 
\[
\mathcal{DDW}_\gamma 
= 
Id^{-1}\circ 
 \alpha_N\left(\mathcal{DD}\widetilde{\mathcal{W}}_\gamma\right) 
= 
Id^{-1}\circ \alpha_N\left(\widetilde{\mathcal{W}}_\gamma\right) 
=
\mathcal{W}_\gamma.   
\]   
\par 
Finally we show the assertion (3).      
Let $\xi: S^n\to \mathbb{R}_+$ be the dual of $\delta$.    
Then, by the assertion (2), we have the following:   
\[
\mathcal{W}_\xi=
\mathcal{DW}_\delta 
= 
\mathcal{DDW}_\gamma 
=
\mathcal{W}_\gamma.  
\]
The following has been known:   
\begin{proposition}[\cite{nishimurasakemi2, taylor}]
\label{nishimurasakemi2_taylor}
For any two continuous functions 
$\gamma_1, \gamma_2: S^n\to \mathbb{R}_+$, 
the equality 
$\Gamma_{\gamma_1}=\Gamma_{\gamma_2}$ 
holds if and only if 
the equality 
$\mathcal{W}_{\gamma_1}=\mathcal{W}_{\gamma_2}$ holds.
\end{proposition}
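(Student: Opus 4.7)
The plan is to set up the chain of bijections
\[
\Gamma_\gamma \;\longleftrightarrow\; \mbox{\rm s-conv}(X_\gamma)
\;\longleftrightarrow\; \widetilde{\mathcal{W}}_\gamma
\;\longleftrightarrow\; \mathcal{W}_\gamma,
\]
where $X_\gamma=\Psi_N\circ\alpha_N^{-1}\circ Id(\mbox{\rm graph}(\gamma))\subset S^{n+1}_{N,+}$. Once each link is verified, the biconditional of the proposition follows by transitivity.

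For the first link, Lemma \ref{inversion} gives $\mbox{\rm inv} = Id^{-1}\circ\alpha_N\circ\Psi_N\circ\alpha_N^{-1}\circ Id$, so $Id^{-1}\circ\alpha_N(X_\gamma) = \mbox{\rm inv}(\mbox{\rm graph}(\gamma))$. The central projection $Id^{-1}\circ\alpha_N\colon S^{n+1}_{N,+}\to\mathbb{R}^{n+1}$ sends great-circle arcs to line segments and therefore carries spherical convex combinations to ordinary convex combinations; hence
\[
Id^{-1}\circ\alpha_N(\mbox{\rm s-conv}(X_\gamma))=\mbox{\rm conv}(\mbox{\rm inv}(\mbox{\rm graph}(\gamma))).
\]
Continuity and positivity of $\gamma$ on the compact $S^n$ ensure that, for each vector $v$ of a chosen orthonormal basis of $\mathbb{R}^{n+1}$, both $v/\gamma(-v)$ and $-v/\gamma(v)$ lie in $\mbox{\rm inv}(\mbox{\rm graph}(\gamma))$; the resulting $2(n+1)$ points form a generalized cross-polytope whose interior contains ${\bf 0}$, so $\mbox{\rm conv}(\mbox{\rm inv}(\mbox{\rm graph}(\gamma)))$ contains ${\bf 0}$ in its interior and is therefore determined by its boundary $\Gamma_\gamma$. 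The third link is tautological: $\mathcal{W}_\gamma = Id^{-1}\circ\alpha_N(\widetilde{\mathcal{W}}_\gamma)$.

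For the middle link, first observe the elementary fact $X_\gamma^\circ=(\mbox{\rm s-conv}(X_\gamma))^\circ$, which holds because $P\cdot Q\ge 0$ is preserved under taking non-negative normalized combinations in $P$. Combined with the characterization $\widetilde{\mathcal{W}}_\gamma=X_\gamma^\circ$ from Subsection \ref{subsection 2.6}, this gives $\widetilde{\mathcal{W}}_\gamma=(\mbox{\rm s-conv}(X_\gamma))^\circ$. Applying Proposition \ref{proposition 2.1} to the closed hemispherical set $\mbox{\rm s-conv}(X_\gamma)$ then yields $\mbox{\rm s-conv}(X_\gamma)=(\mbox{\rm s-conv}(X_\gamma))^{\circ\circ}=(\widetilde{\mathcal{W}}_\gamma)^\circ$, so $\mbox{\rm s-conv}(X_\gamma)$ and $\widetilde{\mathcal{W}}_\gamma$ determine each other.

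The main obstacle is verifying the hypothesis of Proposition \ref{proposition 2.1}, namely that $\mbox{\rm s-conv}(X_\gamma)$ is closed and hemispherical. Continuity of $\gamma>0$ on the compact $S^n$ confines $\mbox{\rm graph}(\gamma)$ to a bounded annulus about ${\bf 0}$; tracking this through $\Psi_N\circ\alpha_N^{-1}\circ Id$ places $X_\gamma$ in a compact subset of the open hemisphere $S^{n+1}_{N,+}$, which is closed and hemispherical, as is then its spherical convex hull.
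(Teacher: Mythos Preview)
The paper does not actually prove Proposition~\ref{nishimurasakemi2_taylor}; it is quoted as a known result from \cite{nishimurasakemi2, taylor} and then used as a black box in the proof of Lemma~\ref{lemma 2.7}. So there is no internal proof to compare your argument against.

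That said, your argument is correct and is in the spirit of \cite{nishimurasakemi2}: you run the spherical polar machinery of Subsection~\ref{subsection 2.4} to get the bijection $\mbox{\rm s-conv}(X_\gamma)\leftrightarrow\widetilde{\mathcal{W}}_\gamma$ via Proposition~\ref{proposition 2.1}, and you push everything back to $\mathbb{R}^{n+1}$ through the central projection using Lemma~\ref{inversion}. Two small remarks. First, when you invoke Proposition~\ref{proposition 2.1}, it suffices to verify that $X_\gamma$ itself is closed and hemispherical (it is, being the continuous image of the compact $S^n$ inside a compact slab of $S^{n+1}_{N,+}$); the proposition is already stated for $\mbox{\rm s-conv}(X)$ with $X$ closed hemispherical, so you need not separately argue closedness of the hull. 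Second, for the link $\Gamma_\gamma\leftrightarrow\mbox{\rm conv}(\mbox{\rm inv}(\mbox{\rm graph}(\gamma)))$ you implicitly use that a compact convex body with nonempty interior equals the convex hull of its boundary; this is standard but worth stating, since it is the only place where ``$\Gamma_{\gamma_1}=\Gamma_{\gamma_2}\Rightarrow$ equal hulls'' is actually established.
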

\noindent 
By Proposition \ref{nishimurasakemi2_taylor}, we have the following:   
\[
\Gamma_{\xi}=\Gamma_{\gamma}.
\]
Since both $\xi, \gamma$ are convex integrands, 
it follows that $\xi=\gamma$.    
\hfill $\square$  
}
\par 
\medskip 
Suppose that the boundary of 
$\widetilde{\mathcal{W}}_\gamma$ is the image 
of a $C^\infty$ embedding 
$\widetilde{\Phi}: S^n\to S^{n+1}_{N,+}{\color{black}-\{N\}}$.    
Then, for the embedding $\widetilde{\Phi}$, 
the \emph{Spherical Caustic} 
and {\color{black}the} \emph{Spherical Symmetry Set} 
can be defined as follows.   
Let $d:S_{N,+}^{n+1}\times S_{N,+}^{n+1}\to \R$ be the distance squared 
function, 
i.e., $d\left({P}_1,{P}_2\right)$ is 
the square of the length of the arc $P_1P_2$.   
Consider the family of functions 
$\widetilde{F}:S^{n+1}_{N,+}\times S^n\to \R$ defined by
\[
\widetilde{F}(v,\theta)=\dfrac{1}{2}d\left(\widetilde{\Phi}(\theta), v\right).
\]
Then, $\widetilde{F}$ may be regarded as {\color{black}the} map from 
$S^{n+1}_{\color{black}N,+}$ to $C^\infty(S^n,\R)$ 
which maps each $v\in S^{n+1}_{\color{black}N,+}$ to the function 
$\widetilde{f}_v(\theta)=\widetilde{F}(v,\theta)\in C^\infty(S^n,\R)$.    
The set consisting of vectors $v$ for which $\widetilde{f}_v(\theta)$ has 
a degenerate critical point form the \emph{Spherical Caustic} of 
$\widetilde{\Phi}$, 
denoted by {\it Sph-Caust}$(\widetilde{\Phi})$ 
(or {\it Sph-Caust}$(\widetilde{\Phi}(S^n))$).    
The set consisting of vectors $v$ for which $\widetilde{f}_v(\theta)$ has 
a multiple critical value form the \emph{Spherical Symmetry Set} of 
$\widetilde{\Phi}$, 
denoted by {\it Sph-Sym}$(\widetilde{\Phi})$ 
(or {\it Sph-Sym}$(\widetilde{\Phi}(S^n))$). 
%
\par 
{\color{black}Moreover, for} any $t\in \mathbb{R}$ 
{\color{black}such that $|t|<\pi$}, 
we define a $C^\infty$ mapping, 
denoted by $\widetilde{\Phi}_t:S^n\to S^{n+1}$, as follows.    
For any $\theta\in S^n$, let $GC_{\widetilde{\Phi}(\theta)}$ is 
the great circle passing through $\widetilde{\Phi}(\theta)$ which is 
perpendicular to $\widetilde{\Phi}(S^n)$ 
at $\widetilde{\Phi}(\theta)$.       
For any $t\in \mathbb{R}$ {\color{black}$(0<|t|<\pi)$}, 
inside $GC_{\widetilde{\Phi}(\theta)}$, there exist 
exactly two {\color{black}distinct} points 
$P_1(\theta), P_2(\theta)$ such that 
$d(P_1(\theta), \widetilde{\Phi}(\theta))=
d(P_2(\theta), \widetilde{\Phi}(\theta))=t^2$.    
Notice that one of $P_1(\theta), P_2(\theta)$ is 
inside the connected component 
of $S^{n+1}-\widetilde{\Phi}(S^n)$ containing $N$.    
Without loss of generality, we may assume that $P_1(\theta)$ is 
inside the region.   
Then, for any $t$ {\color{black}$(0<|t|<\pi)$}, 
the mapping $\widetilde{\Phi}_t:S^n\to S^{n+1}$ is defined by 
$\widetilde{\Phi}_t(\theta)=P_1(\theta)$ 
(resp., $\widetilde{\Phi}_t(\theta)=P_2(\theta)$) if $t$ is positive 
(resp., $t$ is negative).       
For $t=0$, the point $\widetilde{\Phi}_0(\theta)$ 
is defined as the point $\widetilde{\Phi}(\theta)$.   
The mapping $\widetilde{\Phi}_t:S^n\to S^{n+1}$ is called 
the \emph{spherical wave front} of $\widetilde{\Phi}$.    
It is clear that the spherical wave front of $\widetilde{\Phi}$ is 
a $C^\infty$ mapping for any $t$ {\color{black}such that $|t|<\pi$}.    
Notice that 
$\widetilde{\Phi}_{\pi/2}$ is exactly ${D}\widetilde{\Phi}$.     
It follows that ${DD}\widetilde{\Phi}=\widetilde{\Phi}$.   
It is easily seen that the following proposition holds (see also 
FIGURE 1 where the dot dash curve is the spherical 
caustic, dotted curves are the spherical wave fronts.  
Two images by orthogonal projections also are depicted 
for the sake of clearness.).
\begin{figure}[ht]
 \begin{center}
 \includegraphics[scale=0.7]{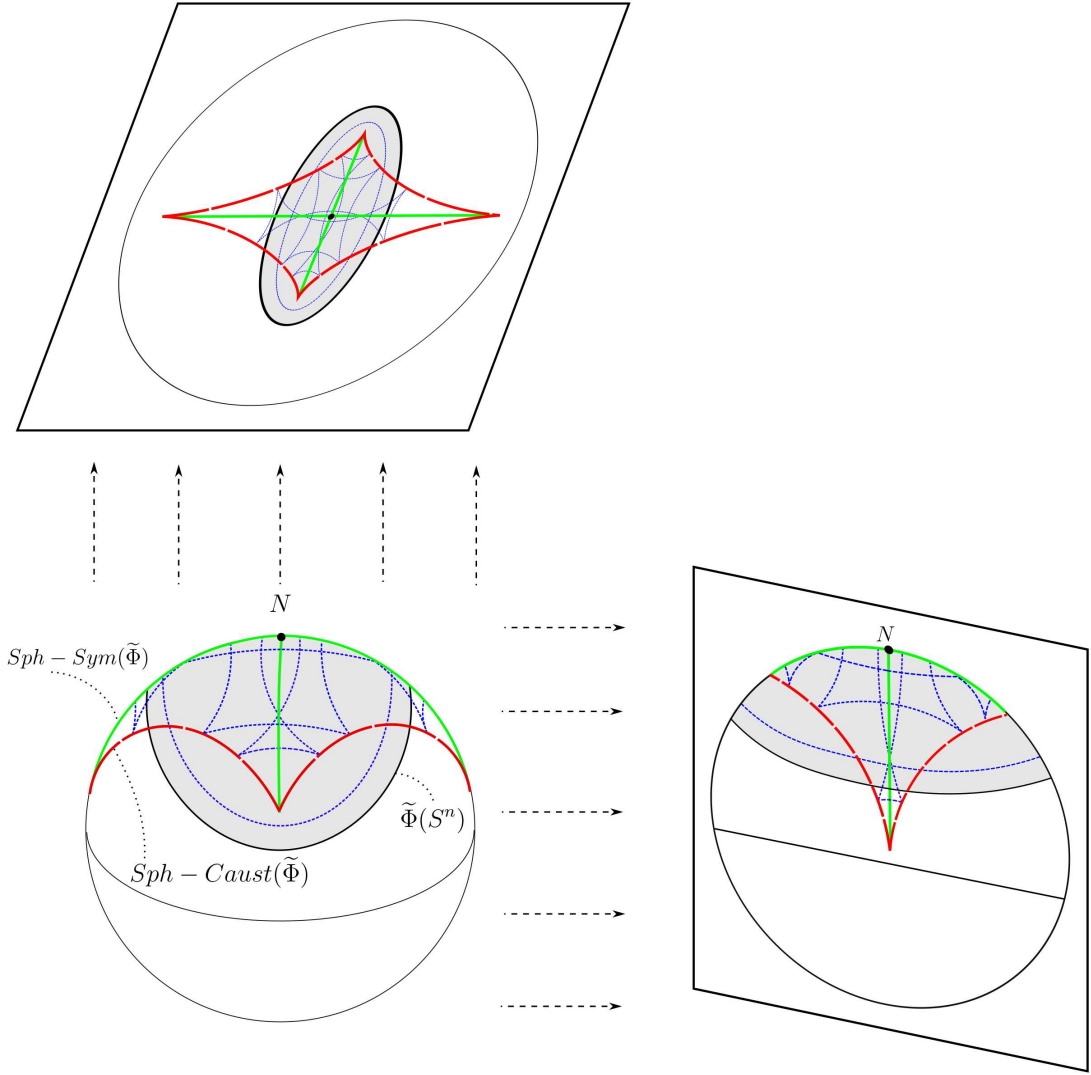}
 \end{center}
\caption{Spherical caustic, spherical symmetry set 
and spherical wave fronts.}
\label{figure 1}
\end{figure}
\begin{proposition}\label{proposition wave front}
\begin{enumerate}
\item[(1)]\quad 
\[
Sph\mbox{-}Caust\left(\widetilde{\Phi}\right) = 
\bigcup_{\color{black}|t|<\pi}\left.\left\{\widetilde{\Phi}_t(\theta)\; \right|\; 
\theta \mbox{\rm \; is a singular point of }\widetilde{\Phi}_t \right\}. 
\]
\item[(2)]\quad 
\[
Sph\mbox{-}Sym\left(\widetilde{\Phi}\right) = 
\bigcup_{\color{black}|t|<\pi}
\left.\left\{\widetilde{\Phi}_t(\theta_1)=\widetilde{\Phi}_t(\theta_2)
\; \right|\; 
\theta_1, \theta_2\in S^n\; (\theta_1\ne \theta_2)  \right\}. 
\]
\end{enumerate}
\end{proposition}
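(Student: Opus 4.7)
The plan is to translate both assertions into the language of the distance-squared function $\widetilde{F}(v,\theta)=\tfrac{1}{2}d(\widetilde{\Phi}(\theta),v)$ and then exploit the following geometric dictionary on the sphere. Since geodesics in $S^{n+1}$ are arcs of great circles, a direct computation shows that the derivative $d\widetilde{f}_v|_{\theta}$ vanishes if and only if $v$ lies on the great normal circle $GC_{\widetilde{\Phi}(\theta)}$ to $\widetilde{\Phi}(S^n)$ at $\widetilde{\Phi}(\theta)$; equivalently, $v=\widetilde{\Phi}_t(\theta)$ for a unique $t$ with $|t|<\pi$. Moreover, at such a critical point the critical value is $\widetilde{f}_v(\theta)=\tfrac{1}{2}t^2$. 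This dictionary is routine from the intrinsic spherical distance formula $d(P,Q)=(\arccos(P\cdot Q))^2$ and will be the backbone of both (1) and (2).

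First I would dispatch assertion (2), which is the easier of the two. By definition $v\in \mathit{Sph}\mbox{-}\mathit{Sym}(\widetilde{\Phi})$ iff there exist $\theta_1\ne\theta_2$ in $S^n$ that are both critical points of $\widetilde{f}_v$ with $\widetilde{f}_v(\theta_1)=\widetilde{f}_v(\theta_2)$. Applying the dictionary, this says exactly that $v$ lies on the two normal great circles at $\widetilde{\Phi}(\theta_1)$ and $\widetilde{\Phi}(\theta_2)$ at equal squared spherical distance. Setting $t$ to be the common signed distance, with the sign fixed by which connected component of $S^{n+1}-\widetilde{\Phi}(S^n)$ contains $v$ (this is what pins down $P_1$ versus $P_2$ in the definition of $\widetilde{\Phi}_t$), this is exactly the statement $v=\widetilde{\Phi}_t(\theta_1)=\widetilde{\Phi}_t(\theta_2)$. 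The reverse inclusion is immediate from the same dictionary.

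For assertion (1), fix $t$ with $|t|<\pi$ and $\theta_0\in S^n$, and put $v_0=\widetilde{\Phi}_t(\theta_0)$. By the dictionary $\theta_0$ is automatically a critical point of $\widetilde{f}_{v_0}$, so the content is to identify degeneracy of the Hessian of $\widetilde{f}_{v_0}$ at $\theta_0$ with singularity of $\widetilde{\Phi}_t$ at $\theta_0$. I would write $\widetilde{\Phi}_t(\theta)=\cos(t)\,\widetilde{\Phi}(\theta)+\sin(t)\,\mathbf{n}(\theta)$, where $\mathbf{n}$ is the unit normal vector field to $\widetilde{\Phi}(S^n)$ in $S^{n+1}$ pointing into the $N$-component. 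Differentiating, $d\widetilde{\Phi}_t$ acts on a tangent vector by a shape-operator-type formula, and a direct local coordinate computation shows that the kernel of $d\widetilde{\Phi}_t|_{\theta_0}$ coincides with the kernel of the Hessian of $\widetilde{f}_{v_0}$ at $\theta_0$. Both inclusions of (1) then follow at once: $v_0\in \mathit{Sph}\mbox{-}\mathit{Caust}(\widetilde{\Phi})$ iff $\theta_0$ is a singular point of $\widetilde{\Phi}_t$.

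The main obstacle is precisely this Jacobian-versus-Hessian identification in part (1). In the Euclidean situation this is the classical fact that the focal set coincides with the set of centers of curvature; on the sphere the argument is entirely analogous because the exponential map along a great normal circle is explicit, but one must keep track of the $\cos(t),\sin(t)$ weights, verify that the identification is valid for every $t$ with $|t|<\pi$, and confirm that the sign convention used to distinguish $P_1$ from $P_2$ is consistent with the decomposition so that no spurious branch is missed. Once this computation is recorded cleanly, both equalities in the proposition follow directly from the dictionary.
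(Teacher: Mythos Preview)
Your approach is correct and is the standard way to justify this proposition: one identifies critical points of $\widetilde{f}_v$ with points $v$ on the normal great circle, matches the critical value with $\tfrac{1}{2}t^2$, and then uses the explicit parametrization $\widetilde{\Phi}_t(\theta)=\cos(t)\,\widetilde{\Phi}(\theta)+\sin(t)\,\mathbf{n}(\theta)$ to identify the kernel of the Hessian of $\widetilde{f}_{v_0}$ with the kernel of $d\widetilde{\Phi}_t$. The paper, however, gives no proof at all: it simply declares that ``it is easily seen that the following proposition holds'' and points to Figure~1 by way of illustration. So there is nothing to compare against beyond noting that you have supplied the details the authors regarded as routine; your write-up is in fact more informative than what appears in the paper, and the ``main obstacle'' you flag (the Jacobian--Hessian identification) is exactly the spherical analogue of the classical Euclidean focal-set computation, which is indeed straightforward once the formula for $\widetilde{\Phi}_t$ is written down.
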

By Proposition \ref{proposition wave front}, 
if we consider inside the sphere $S^{n+1}$, 
it is expected that everything is clearly understood.   
Moreover, by \cite{hannishimura4}, 
the angle $\pi/2$ is {\color{black}closely} related to 
the self-dual Wulff shapes.    Thus, we may consider that 
$\pi/2$ is a  significant number for studying Wulff shapes, 
although we have no such significant real numbers 
if we restrict ourselves to consider Wulff shapes 
only in $\mathbb{R}^{n+1}$.    
\begin{definition}\label{legendrian}
{\rm 
A $C^\infty$ map-germ 
$f: (\mathbb{R}^n, {\bf 0})\to (\mathbb{R}^{n+1}, {\bf 0})$ 
is said to be {\it Legendrian} 
if there exists a germ of $C^\infty$ vector field 
$\nu_f$ along $f$ satisfying the following two:
\begin{enumerate}
\item[(1)]\quad 
\[
\frac{\partial f}{\partial x_1}(x)\cdot \nu_f(x)=\cdots = 
\frac{\partial f}{\partial x_n}(x)\cdot \nu_f(x)=0.   
\]
\item[(2)]\quad 
The map-germ 
$L_f: (\mathbb{R}^n, {\bf 0})\to T_1\mathbb{R}^{n+1}$ defined as follows 
is non-singular, where $T_1\mathbb{R}^{n+1}$ is 
the unit tangent bundle of $\mathbb{R}^{n+1}$.   
\[
L_f(x)=\left(f(x), \nu_f(x)\right).
\]
\end{enumerate}   
 }
\end{definition}  
\noindent 
It is well-known that for any $t$ {\color{black}such that $|t|<\pi$} and any 
$\theta\in S^n$, the germ of spherical wave front 
$\widetilde{\Phi}_t: (S^n, \theta)\to {\color{black}S^{n+1}}-\{N\}$ 
is Legendrian.   
For details on Legendrian map-germs, see for instance 
\cite{arnold, arnolddynamical8, arnold-guseinzade-varchenko, 
izumiya1, izumiya-takahashi}.   
\subsection{Andrews formulas}\label{subsection 2.7}
Let $\gamma:S^n\to\R_+$ be 
a $C^\infty$ {\color{black}strictly} convex integrand 
and let $\delta:S^n\to\R_+$ 
be the dual convex integrand of $\gamma$.   
Notice that $\partial \mathcal{W}_\gamma$ 
(resp., $\partial \mathcal{DW}_\gamma$) 
is the image of 
the embedding $\Phi_\delta$ (resp., $\Phi_\gamma$)  
defined by 
$\Phi_\delta(\theta)=\left(\theta, \widehat{\delta}(\theta)\right)$ (resp.,  
$\Phi_\gamma(\theta)=\left(\theta, \widehat{\gamma}(\theta)\right)$).  
%
Define the mapping $h_\delta:S^n\to S^n$ 
(resp., $h_\gamma:S^n\to S^n$) so that  
$\gamma(\theta)$ (resp., $\delta(\theta)$) 
is the perpendicular distance 
from the origin ${\bf 0}$ 
to the affine tangent hyperplane to $\Phi_\delta(S^n)$ 
(resp., $\Phi_\gamma(S^n)$) 
at $\Phi_\delta(h_\delta(\theta))$ 
(resp., $\Phi_\gamma(h_\gamma(\theta))$).       
Then, it turns out that both of $h_\delta, h_\gamma$ 
are $C^\infty$ diffeomorphisms 
(see Remark \ref{remark 3.1} in Section \ref{section 3})
\footnote{\color{black}Subsection \ref{subsection 2.7} is used 
only for the proof of Theorem \ref{theorem 3}.   For the proofs of 
Theorems \ref{theorem 1} and \ref{theorem 2}, 
Subsection \ref{subsection 2.7} 
is unnecessary.}.     
Moreover, {\color{black}since} both of $\Phi_\delta(S^n), \Phi_\gamma(S^n)$ 
are strictly convex {\color{black}by \cite{hannishimura2}}, 
{\color{black}as shown in \cite{andrews},  
the simultaneous equations for envelopes 
leads to}  
the following equalities for any $\theta\in S^n$ 
(see FIGURE \ref{Andrews's formula} where $T_1$ (resp., $T_2$) denotes  
the affine tangent hyperplane to 
$\Phi_\delta(S^n)$ (resp., $\Phi_\gamma(S^n)$) 
at $\Phi_\delta(h_\delta(\theta))$ 
(resp., $\Phi_\gamma(h_\gamma(\theta))$)). 
\begin{eqnarray*}
\Phi_\delta(h_\delta(\theta)) & 
= & \gamma(\theta)\theta+\nabla \gamma(\theta), \\ 
\Phi_\gamma(h_\gamma(\theta)) & 
= & \delta(\theta)\theta+\nabla \delta(\theta).  
\end{eqnarray*}
Here, $\nabla \gamma(\theta)$ (resp., $\nabla \delta(\theta)$) stands for  
the gradient vector of $\gamma$ (resp., $\delta$) at $\theta\in S^n$  
with respect to the standard metric on $S^n$.     
\begin{figure}[ht]
 \begin{center}
 \includegraphics[scale=0.5]{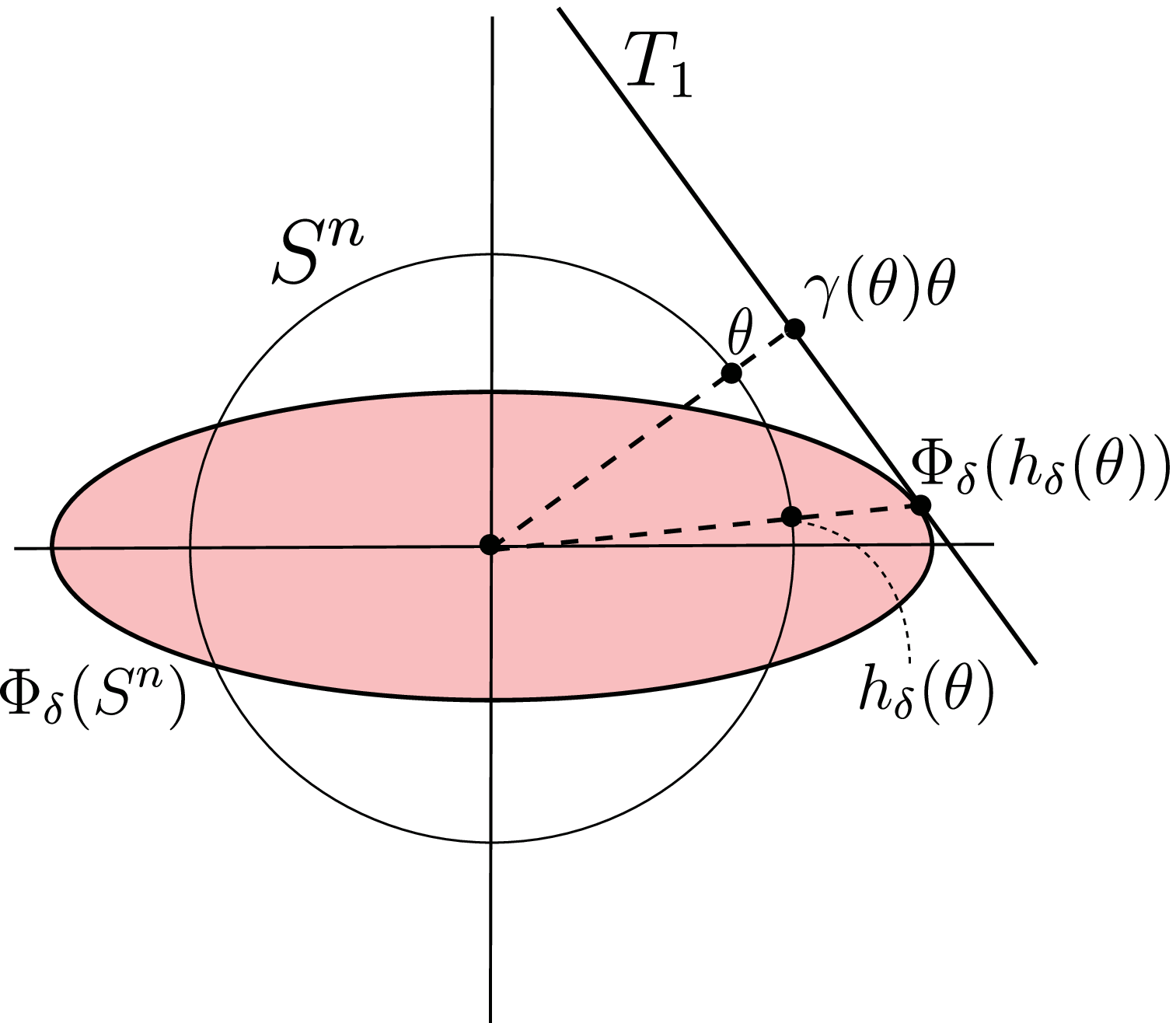}
 \includegraphics[scale=0.5]{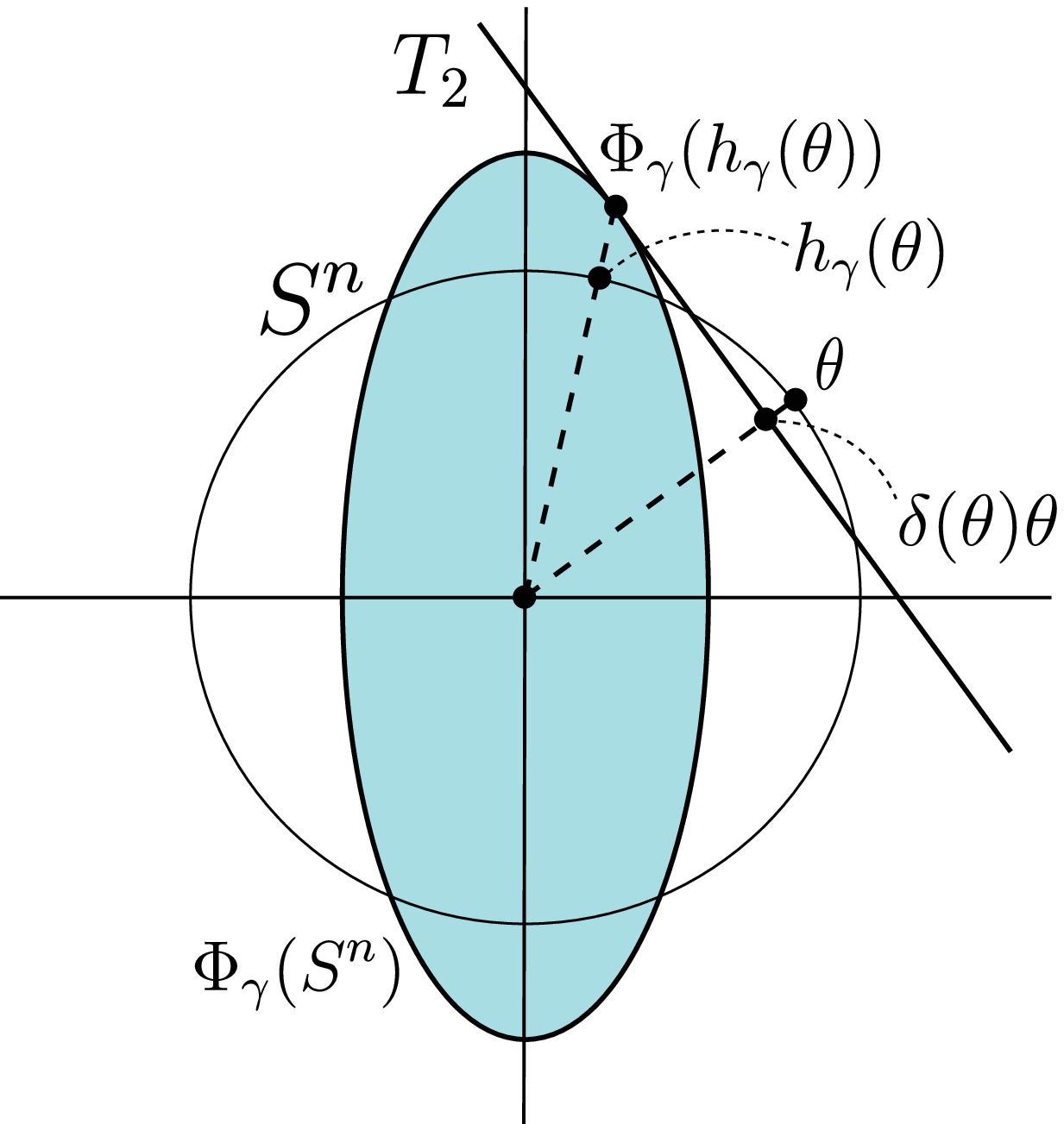}
 \end{center}
\caption{Andrews formulas}\label{Andrews's formula}
\end{figure}
\section{Proof of Theorem \ref{theorem 1}}\label{section 3}
The notations, terminologies and notions introduced in Sections 1 and 
\ref{section 2} 
are used without 
explaining them explicitly again. 
{\color{black}
Firstly, we show that $\delta$ is a $C^{\infty}$ convex integrand under the
 assumption that its dual $\gamma$ is a $C^{\infty}$ 
 strictly convex integrand.
}
\par 
Define the $C^\infty$ embedding 
$\Psi: S^n\to \mathbb{R}^{n+1}-\{0\}$ by 
\[
\Psi(\theta)=(\theta, \gamma(\theta)).   
\]
Since $\delta$ and $\widehat{\delta}$ are $\mathcal{A}$-equivalent, 
it is sufficient to show that the function 
$\widehat{\delta}: S^n\to \mathbb{R}_+$ is of class $C^\infty$ 
so that 
$ped_{\Phi_\delta,{\bf 0}}=\Psi$ where 
$\Phi_\delta$ is the $C^\infty$ embedding 
defined in Subsection \ref{subsection 2.7}
For this purpose, the ambient space is changed to $S^{n+1}$.   
Let $\widetilde{\Psi}_{\widehat{\gamma}}:S^{n}\to S_{N,+}^{n+1}$ 
be the $C^\infty$ embedding defined by 
\[
\widetilde{\Psi}_{\widehat{\gamma}}(\theta)=
\alpha_N^{-1}\circ Id\left(\theta, \widehat{\gamma}(\theta)
\right) \quad (\theta\in S^n).   
\] 
Then, since $\gamma$ is a convex integrand, the image 
$\widetilde{\Psi}_{\widehat{\gamma}}(S^n)$ is the boundary of a 
spherical convex set $\widetilde{\mathcal{W}}_\delta$.    
Set 
\[
\widetilde{\Psi}_{\widehat{\gamma}}(\theta)=
\left(\widetilde{\Psi}_1(\theta), \ldots, 
\widetilde{\Psi}_{n+2}(\theta)\right).    
\]
Define the mapping $\widetilde{\Phi}: S^n\to S^{n+1}$ by 
\[
\widetilde{\Phi}={D}\widetilde{\Psi}_{\widehat{\gamma}}.    
\]
Then, by Proposition \ref{proposition 2}, we have the following: 
\[
s\mbox{-}ped_{\widetilde{\Phi}, {\color{black}N}}
=\Psi_N\circ \widetilde{\Psi}_{\widehat{\gamma}}.     
\]
Set 
\[
\widetilde{\Phi}(\theta)=
\left(\widetilde{\Phi}_1(\theta), \ldots, 
\widetilde{\Phi}_{n+2}(\theta)\right),     
\]
where $\left(\widetilde{\Phi}_1(\theta), \ldots, 
\widetilde{\Phi}_{n+2}(\theta)\right)$ 
is the standard Euclidean expression of 
the point $\widetilde{\Phi}(\theta)$.  
Define the mapping $\widetilde{h}: S^n\to S^n$ by   
\[
\widetilde{h}(\theta)=
\frac{\left(\widetilde{\Phi}_1(\theta), \ldots, 
\widetilde{\Phi}_{n+1}(\theta)\right)}
{\left|\left|\left(\widetilde{\Phi}_1(\theta), \ldots, 
\widetilde{\Phi}_{n+1}(\theta) \right)\right|\right|}.   
\]
Then, by the definition of dual ${D}\widetilde{\Psi}_{\widehat{\gamma}}$, 
it follows that $\widetilde{h}$ is a well-defined $C^\infty$ mapping.

{\color{black}Since $\gamma$ 
is a strictly convex integrand, b}y the two equalities 
${DD}\widetilde{\Psi}_{\widehat{\gamma}}=\widetilde{\Psi}_{\widehat{\gamma}}$ 
and 
\[
\frac{\left(\widetilde{\Psi}_1(\theta), \ldots, 
\widetilde{\Psi}_{n+1}(\theta)\right)}
{\left|\left|\left(\widetilde{\Psi}_1(\theta), \ldots, 
\widetilde{\Psi}_{n+1}(\theta) \right)\right|\right|}
=-\theta, 
\] 
it follows that $\widetilde{h}$ is bijective.   
\par 
Next, we show that $\widetilde{h}$ is a $C^\infty$ diffeomorphism.    
Since $\widetilde{\Phi}$ is the spherical dual of the 
$C^\infty$ embedding 
$\widetilde{\Psi}_{\widehat{\gamma}}: S^n\to S^{n+1}_{N,+}$, 
it is the spherical wave front  
$\left(\widetilde{\Psi}_{\widehat{\gamma}}\right)_{\pi/2}$.   
Thus, $\widetilde{\Phi}$ is Legendrian.     
Hence, for any singular point $\theta_0\in S^n$ of 
$\widetilde{\Phi}$, there exists a germ of $C^\infty$ normal vector 
field $\nu_{\widetilde{\Phi}}$ along $\widetilde{\Phi}$ such that 
the map-germ $L_{\widetilde{\Phi}}: (S^n, \theta_0)\to T_1S^{n+1}$ 
defined as follows 
is non-singular.   
\[
L_{\widetilde{\Phi}}(\theta)= 
\left(
\widetilde{\Phi}(\theta), \nu_{\widetilde{\Phi}}(\theta)
\right).   
\]    
In particular, even at the critical value $\widetilde{\Phi}(\theta_0)$, 
the normal great circle to 
$\widetilde{\Phi}(S^n)$ at $\widetilde{\Phi}(\theta_0)$ 
must be unique.    
By this fact, it is easily seen that the spherical Wulff shape 
$\widetilde{W}_\delta$ is strictly spherical convex.     
Hence, by \cite{hannishimura2}, it follows that 
the image $\widetilde{\Phi}(S^n)$ is the graph of a $C^1$ function.   
This implies that the derivative $d\widetilde{h}_\theta$ is bijective 
for any $\theta\in S^n$.    
Therefore, by the inverse function theorem, 
$\widetilde{h}$ is a $C^\infty$ diffeomorphism.   
\par 
Notice that $\Phi(\theta)$ can be expressed as follows:   
\[
\Phi(\theta)
=Id^{-1}\circ \alpha_N\circ \widetilde{\Phi}\circ \widetilde{h}^{-1}(\theta) 
= 
\left(
\theta, \tan \left(
\cos^{-1}\left(
\widetilde{\Phi}_{n+2}\circ \widetilde{h}^{-1}(\theta)
\right)
\right)
\right).  
\] 
Hence, we have the following:   
\[
\widehat{\delta}(\theta) = 
\tan \left(
\cos^{-1}\left(
\widetilde{\Phi}_{n+2}\circ \widetilde{h}^{-1}(\theta)
\right)
\right).    
\]
Since $\widetilde{\Phi}_{n+2}$ is of class $C^\infty$ and 
all of $\widetilde{h}: S^n\to S^n$, $\cos : (0, \pi/2)\to (0,1)$  
and $\tan : (0,\pi/2)\to \mathbb{R}_+$ are $C^\infty$ diffeomorphisms, 
it follows that 
$\widehat{\delta}$ is of class $C^\infty$.    
\indent
\par
{\color{black}
Next, we show that $\gamma$ is a strictly convex integrand 
under the assumption that 
its dual $\delta$ is a $C^{\infty}$ convex integrand. 
In \cite{hannishimura2}, it is shown that a Wulff shape is 
strictly convex  if and only if its convex integrand 
$S^{n}\to \mathbb{R}_{+}$ is of class $C^{1}$. Since $\delta$ is a 
 $C^{\infty}$ convex integrand, we have that the Wulff shape 
 \[
 \mathcal{W}_{\delta}=\mbox{the convex hull of inv(graph($\gamma$))}
 \] 
 is strictly convex. This implies that the convex integrand 
 $\gamma$ is a $C^{\infty}$ strictly convex integrand.
}
\hfill $\Box$
\begin{remark}\label{remark 3.1}
\begin{enumerate}
\item[(1)]\quad 
Notice that 
$\widetilde{h}: S^n\to S^n$ is exactly the same mapping 
as $h_\delta: S^n\to S^n$ given in Subsection \ref{subsection 2.7}.   
Thus, $h_\delta$ is a $C^\infty$ diffeomorphism.   
Similarly, $h_\gamma$ in Subsection \ref{subsection 2.7} 
also is a $C^\infty$ diffeomorphism.  
{
\color{black}
\item[(2)]\quad 
FIGURE 3 
  explains that $\widetilde{h}: S^n\to S^n$ is not bijective 
if $\gamma$ is not a $C^\infty$ strictly convex integrand but 
a $C^\infty$ convex integrand.    
}

\begin{figure}[ht]
 \begin{center}
 \includegraphics[scale=0.7]{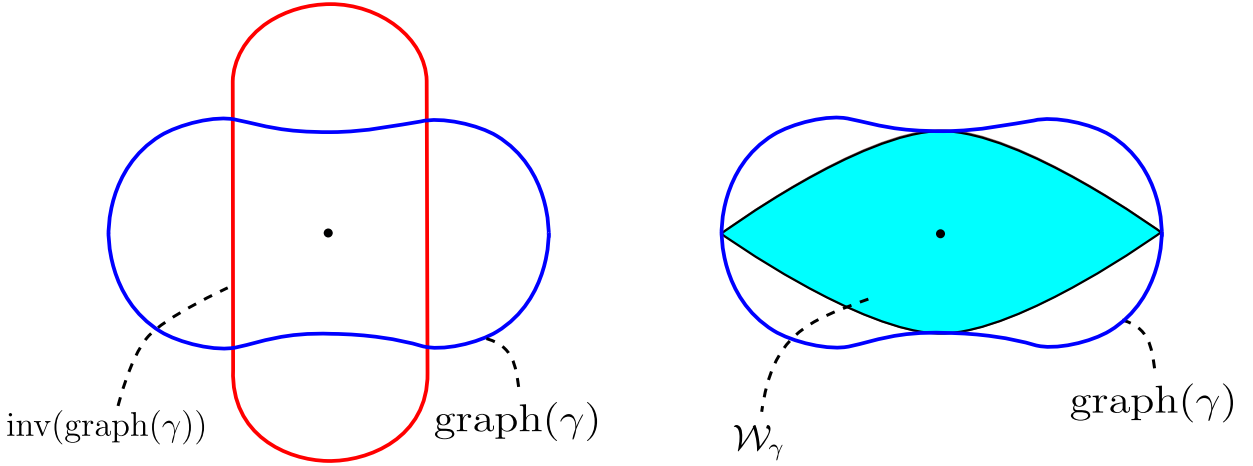}
 \end{center}
{
\color{black}
\caption{LEFT: The image inv(graph($\gamma$) for a 
$C^\infty$ not strictly convex integrand $\gamma$.   
RIGHT: $\mathcal{W}_\gamma$.}
}
\label{not strictly convex integrand}
\end{figure}
\end{enumerate}
\end{remark}
\section{Proof of Theorem \ref{theorem 2}}\label{section 4}
The notations, terminologies and notions introduced in Sections 1,  
\ref{section 2} and \ref{section 3} are used without 
explaining them explicitly again.      
\par 
Define the function 
$\widetilde{\gamma}: S^n\to \mathbb{R}_+$ by $\widetilde{\gamma}
(\theta)=d(\widetilde{\Psi}_{\widehat{\gamma}}(\theta),N)$.    
\begin{lemma}\label{lemma 4.1}
There exists a point $\theta$ of $S^{n}$ which is a degenerate critical 
point of $\widetilde{\gamma}$ 
if and only if 
the north pole 
$N$ is contained 
in the spherical caustic of $\widetilde{\Psi}_{\widehat{\gamma}}$.
\end{lemma}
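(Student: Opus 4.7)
The plan is to observe that $\widetilde{\gamma}$ essentially \emph{is} one of the distance-squared functions $\widetilde{f}_v$ used to define the spherical caustic, and then unwind the definitions. Concretely, take the $C^\infty$ embedding $\widetilde{\Phi}:=\widetilde{\Psi}_{\widehat{\gamma}}:S^n\to S^{n+1}_{N,+}$ (which is an embedding, as used repeatedly in Section \ref{section 3}) as the embedding figuring in the construction of the spherical caustic in Subsection \ref{subsection 2.6}. By the definition of $\widetilde{F}:S^{n+1}_{N,+}\times S^n\to\mathbb{R}$, evaluating at $v=N$ gives
\[
\widetilde{F}(N,\theta)=\tfrac{1}{2}\,d\bigl(\widetilde{\Psi}_{\widehat{\gamma}}(\theta),N\bigr)=\tfrac{1}{2}\widetilde{\gamma}(\theta),
\]
so that $\widetilde{f}_N=\tfrac{1}{2}\widetilde{\gamma}$ as a $C^\infty$ function on $S^n$.

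The next step is to note that multiplication by the nonzero constant $\tfrac{1}{2}$ preserves both the set of critical points and the Hessian up to a nonzero scalar; in particular it preserves degeneracy of critical points. Consequently, $\widetilde{\gamma}$ admits a degenerate critical point at some $\theta\in S^n$ if and only if $\widetilde{f}_N$ does.

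Finally, unwind the definition of the spherical caustic: by Subsection \ref{subsection 2.6}, $N\in\textit{Sph-Caust}\bigl(\widetilde{\Psi}_{\widehat{\gamma}}\bigr)$ precisely when $\widetilde{f}_N$ has a degenerate critical point. (Note that $N\in S^{n+1}_{N,+}$ since $N\cdot N=1>0$, so $N$ does lie in the parameter domain for $\widetilde{F}$.) Chaining the two equivalences yields the lemma.

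In short, this is essentially a definition-chase, so there is no substantial obstacle; the only point requiring a moment of care is verifying that $N$ lies in the domain $S^{n+1}_{N,+}$ where the family $\widetilde{F}$ is defined, and that the scalar factor $\tfrac{1}{2}$ does not affect degeneracy, both of which are immediate.
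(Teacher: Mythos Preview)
Your proof is correct and takes essentially the same approach as the paper: both observe that the function $\widetilde f_N$ in the definition of the spherical caustic coincides (up to the harmless factor $\tfrac12$) with $\widetilde\gamma$, so the equivalence is immediate from the definitions. The paper phrases this via the catastrophe map $\Phi(\theta,P)=(d(\widetilde\Psi_{\widehat\gamma}(\theta),P),P)$ and its singular set, but this is only a repackaging of the same definition-chase; your direct argument is in fact slightly cleaner.
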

\begin{proof}
Let $S(\Phi)$ be the set consisting of singular points of the map 
$\Phi$ defined by
\[
\begin{array}{cccccc}
  \Phi: & S^n\times S_{N,+}^{n+1} & \rightarrow & \R\times S_{N,+}^{n+1} \\
  \,\,\, & (\theta,{P}) & \mapsto & 
(d(\widetilde{\Psi}_{\widehat{\gamma}}(\theta),{P}),{P}).   
\end{array}
\]
Then, it is well-known that $S(\Phi)$ is an $(n+1)$-dimensional 
$C^\infty$ submanifold of $S^n\times S_{N,+}^{n+1}$ 
(for instance, see \cite{porteous, romerofusterruas}).       
{\color{black}
Denote the restriction to $S(\Phi)$ of the canonical projection 
$\pi:S^n\times S_{N,+}^{n+1}\to S_{N,+}^{n+1}$ 
by $\Pi$.      
Then, the set $\Pi \left(S(\Pi) \right)$ is
}
the spherical caustic of $\widetilde{\Psi}_{\widehat{\gamma}}$.     
\par 
Notice that when ${P}=N$,  
we have that $\Phi(\theta,N)=(\widetilde{\gamma}(\theta),N)$.    
Thus, we have the following:   
\[
N\in Spherical\mbox{-}
Caust(\widetilde{\Psi}_{\widehat{\gamma}})\Leftrightarrow \exists 
\theta\in S^n \mbox{\rm \; such that } \nabla\widetilde{\gamma}
(\theta)=0\,\,\,{\rm and}\,\,\,\det({\rm Hess}\widetilde{\gamma})
(\theta)=0.
\] 
\end{proof}
\par 
For the symmetry set of $\widetilde{\Psi}_{\widehat{\gamma}}$, 
we have the following.\\
Let $\mathcal{W}$ be a 
$C^{\infty}$ Wulff shape and 
$\widetilde{\mathcal{W}}$ be the spherical Wulff shape of $\mathcal{W}$.
\begin{lemma}\label{lemma 4.2}
The origin ${\bf 0}$ is 
a point of symmetry set of $\partial\mathcal{W}$ if and 
only if the north polar 
$N$ is a point of spherical symmetry set of $\partial 
\widetilde{\mathcal{W}}$.
\end{lemma}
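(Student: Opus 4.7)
The plan is to express the Euclidean squared distance from the origin and the spherical squared arc distance from $N$, viewed as functions on $S^n$, as strictly monotone $C^{\infty}$ reparametrizations of one another. Once this is done, critical points and equalities of critical values correspond bijectively, and the equivalence asserted in the lemma follows directly from the definitions of the (spherical) symmetry set.

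Concretely, I would fix a $C^{\infty}$ embedding $\Phi:S^n\to\R^{n+1}$ with $\Phi(S^n)=\partial\mathcal{W}$ and set $\widetilde{\Phi}=\alpha_N^{-1}\circ Id\circ\Phi$, so that $\widetilde{\Phi}(S^n)=\partial\widetilde{\mathcal{W}}\subset S^{n+1}_{N,+}$. Define
\[
f_{{\bf 0}}(\theta)=\tfrac{1}{2}\|\Phi(\theta)\|^{2}, \qquad
\widetilde{f}_{N}(\theta)=\tfrac{1}{2}d\bigl(\widetilde{\Phi}(\theta),N\bigr)^{2}.
\]
By the definitions recalled in Subsections \ref{subsection 2.5} and \ref{subsection 2.6}, ${\bf 0}$ lies in the symmetry set of $\partial\mathcal{W}$ precisely when there exist distinct critical points $\theta_{1}\neq\theta_{2}$ of $f_{{\bf 0}}$ with $f_{{\bf 0}}(\theta_{1})=f_{{\bf 0}}(\theta_{2})$, and analogously for $N$ and $\widetilde{f}_{N}$.

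The key computation uses the explicit form of the central projection: since $\widetilde{\Phi}(\theta)=(\Phi(\theta),1)/\sqrt{\|\Phi(\theta)\|^{2}+1}$, one has $N\cdot\widetilde{\Phi}(\theta)=(\|\Phi(\theta)\|^{2}+1)^{-1/2}$, hence $d(\widetilde{\Phi}(\theta),N)=\arctan\|\Phi(\theta)\|$. Therefore $\widetilde{f}_{N}=F\circ f_{{\bf 0}}$ with $F(t)=\tfrac{1}{2}\arctan^{2}(\sqrt{2t})$. Writing $F(t)=\tfrac{1}{2}H(2t)$ where $H(s)=\arctan^{2}(\sqrt{s})$, the evenness of $\arctan^{2}$ makes $H$, and hence $F$, a $C^{\infty}$ strictly increasing diffeomorphism $[0,\infty)\to[0,\pi^{2}/8)$ with $F'(t)>0$ for all $t\geq 0$ (in particular $F'(0)=1$).

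From the chain rule $\nabla\widetilde{f}_{N}=F'(f_{{\bf 0}})\,\nabla f_{{\bf 0}}$ together with $F'>0$, the functions $f_{{\bf 0}}$ and $\widetilde{f}_{N}$ share the same critical points on $S^{n}$; strict monotonicity of $F$ gives $f_{{\bf 0}}(\theta_{1})=f_{{\bf 0}}(\theta_{2})$ iff $\widetilde{f}_{N}(\theta_{1})=\widetilde{f}_{N}(\theta_{2})$. Combining these two observations, $f_{{\bf 0}}$ has a multiple critical value iff $\widetilde{f}_{N}$ does, which yields the claim. The only nontrivial step, and the main obstacle I anticipate, is the smoothness and strict positivity of $F$ at $t=0$, where the bare expression $\arctan^{2}(\sqrt{2t})$ involves the non-smooth function $\sqrt{2t}$; this dissolves upon writing $\arctan^{2}$ as an even power series in its argument, which shows that $\arctan^{2}(\sqrt{2t})$ is a genuinely smooth function of $t$.
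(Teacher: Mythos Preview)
Your argument is correct and close in spirit to the paper's, though the packaging differs. The paper argues geometrically: if $\mathbf 0$ lies in $Sym(\partial\mathcal W)$ then there are two boundary points $x_1,x_2$ with $|x_1|=|x_2|$ and with the segments $x_i\mathbf 0$ lying on the affine normals to $\partial\mathcal W$; it then asserts that the central projection $\alpha_N^{-1}\circ Id$ carries these normal lines to spherical normals through $N$ and carries equal Euclidean radii to equal spherical arc-lengths from $N$. Your approach compresses both facts into the single identity $\widetilde f_N=F\circ f_{\mathbf 0}$ with $F$ strictly increasing and $C^\infty$: the chain rule gives the correspondence of critical points (replacing the paper's ``normals go to normals'' step, which is not entirely obvious since $\alpha_N^{-1}\circ Id$ is not conformal), and the injectivity of $F$ gives the correspondence of multiple critical values (replacing the paper's ``equal radii go to equal arc-lengths''). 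Your route is arguably more self-contained.

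One small remark: your worry about the behaviour of $F$ at $t=0$ is unnecessary here. Since $\mathcal W$ contains the origin as an interior point, $\|\Phi(\theta)\|$ is bounded away from $0$ on $S^n$, so $f_{\mathbf 0}$ takes values in a compact interval inside $(0,\infty)$, where $F(t)=\tfrac12\arctan^2(\sqrt{2t})$ is manifestly $C^\infty$ with $F'>0$. You never need the power-series argument at $t=0$. (Also, note that in the paper $d$ already denotes the \emph{squared} arc-length, so the paper's $\widetilde f_v=\tfrac12 d(\widetilde\Phi(\theta),v)$ coincides with your $\widetilde f_N$; the extra square in your formula is just a notational mismatch, not a mathematical one.)
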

\begin {proof} 
First we give the proof of  the \lq\lq only if\rq\rq part.     
Suppose that the origin ${\bf 0}$ of $\mathbb{R}^{n+1}$ 
is a point of symmetry 
set of $\partial\mathcal{W}$. 
Then, there exist two point $x_{1}, x_{2}$ of $\partial\mathcal{W}$ such that 
$|x_{1}{\bf 0}| = |x_{2}{\bf 0}|$
and $x_{1}{\bf 0}$ (resp., $x_{2}{\bf 0}$ ) is a subset of the affine normal line 
$\ell_{1}$ (resp.  $\ell_{2}$) to  
$\partial\mathcal{W}$ at $x_{1}$(resp., $x_{2}$).
Set $\widetilde{X}=\alpha_{N}^{-1}\circ Id(X)$, where $X$ is a subset of 
$\mathbb{R}^{n+1}$. 
Since $\alpha_{N}: S^{n+1}_{N, +}\to \mathbb{R}^{n+1}\times \{1\}$ 
is the central projection at $N$, we have that the half of great circle
$\alpha_{N}^{-1}\circ Id(\ell_1)$ 
(resp., $\alpha_{N}^{-1}\circ Id(\ell_2)$) 
is the half of spherical normal to  
$\partial\widetilde{\mathcal{W}}$ at $\alpha_{N}^{-1}\circ Id(x_1)$ 
(resp., $\alpha_{N}^{-1}\circ Id(x_2)$) 
and $d(\alpha_{N}^{-1}\circ Id(x_1), N)=
d(\alpha_{N}^{-1}\circ Id(x_2), N)$.     
Therefore, if the origin is a point of 
$Sym(\partial\mathcal{W})$, then the north pole $N$ is 
a point of {\it {Spherical\mbox{-}Sym}}$(\partial\widetilde{\mathcal{W}})$. 
\par
Similarly, the \lq\lq if\rq\rq  part of Lemma \ref{lemma 4.2} follows.   
\end{proof}

\begin{remark}\label{remark 4.1}
Notice that $\gamma(\theta)=\tan(\widetilde{\gamma}(\theta))$
{\color{black}
$\gamma(\theta)={\rm tan}\left(\frac{\pi}{2}-\widetilde{\gamma}(\theta)\right)$
}
 for any 
$\theta\in S^n$. 
Since the function $\tan:(0,{\pi}/{2})\to\R_+$ is a $C^\infty$ 
diffeomorphism, 
it follows that $\theta$ is a non-degenerate critical point of  $\gamma$ 
if and only if $\theta$ is a non-degenerate critical point of 
$\widetilde{\gamma}$.
\end{remark}
\begin{proposition}\label{proposition 1}
Let $\mathcal{W}_\gamma$ be the Wulff shape 
associated with $\gamma$ and 
let $\widetilde{\mathcal{W}}_\gamma$ be the spherical Wulff shape 
associated with $\mathcal{W}_\gamma$. Then the following holds:
\begin{enumerate}
\item[(1)] {\it \mbox{Spherical-Caust}}
{\rm $\left(\partial \widetilde{\mathcal{W}}_\gamma\right)$}  
= {\it Spherical-Caust}
{\rm $\left(\partial \mathcal{D}\widetilde{\mathcal{W}}_\gamma\right)$}.
\item[(2)] {\it \mbox{Spherical-Sym}}
{\rm $\left(\partial \widetilde{\mathcal{W}}_\gamma\right)$} 
= {\it Spherical-Sym}
{\rm $\left(\partial \mathcal{D}\widetilde{\mathcal{W}}_\gamma\right)$.
}
\end{enumerate}
\end{proposition}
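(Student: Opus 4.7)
The plan is to reduce both assertions to Proposition \ref{proposition wave front} by exploiting the fact that $\partial\mathcal{D}\widetilde{\mathcal{W}}_\gamma$ is itself a spherical wave front of $\partial\widetilde{\mathcal{W}}_\gamma$. I would set $\widetilde{\Phi}=\widetilde{\Psi}_{\widehat{\gamma}}$, so that $\widetilde{\Phi}(S^n)=\partial\widetilde{\mathcal{W}}_\gamma$ and $D\widetilde{\Phi}(S^n)=\partial\mathcal{D}\widetilde{\mathcal{W}}_\gamma$. As recorded in Subsection \ref{subsection 2.6}, we have the identification $D\widetilde{\Phi}=\widetilde{\Phi}_{\pi/2}$, and by the involutivity ${DD}\widetilde{\Phi}=\widetilde{\Phi}$ the roles of the two embeddings are symmetric.

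First, I would establish a reparametrization identity between the two one-parameter families of spherical wave fronts. By the defining orthogonality of the spherical dual, the great circle perpendicular to $\widetilde{\Phi}(S^n)$ at $\widetilde{\Phi}(\theta)$ coincides with the great circle perpendicular to $D\widetilde{\Phi}(S^n)$ at $D\widetilde{\Phi}(\theta)$; both are $GC_{\widetilde{\Phi}(\theta)}$. Hence the wave fronts of $D\widetilde{\Phi}$ are obtained from those of $\widetilde{\Phi}$ by sliding along the common normal great circles through the fixed arc-length $\pi/2$, yielding the point-wise identity
\[
(D\widetilde{\Phi})_s(\theta)=\widetilde{\Phi}_{\pi/2-s}(\theta),
\]
with the sign convention chosen so that the $N$-side component of $S^{n+1}-D\widetilde{\Phi}(S^n)$ is tracked consistently with the $N$-side component of $S^{n+1}-\widetilde{\Phi}(S^n)$.

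Next, I would apply Proposition \ref{proposition wave front}. For (1), that proposition expresses the spherical caustic of $\widetilde{\Phi}$ as the union over $|t|<\pi$ of the singular values of $\widetilde{\Phi}_t$; under the substitution $t=\pi/2-s$ the analogous union for $D\widetilde{\Phi}$ consists of the singular values of precisely the same hypersurfaces, and hence describes the same subset of $S^{n+1}$ (the two parameter ranges sweep out the same collection of wave fronts up to sign). The same substitution applied to Proposition \ref{proposition wave front} (2) gives the equality of spherical symmetry sets, since the condition $\theta_1\neq\theta_2$ is preserved under a reparametrization in the time variable.

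The main obstacle is the rigorous verification of the wave-front reparametrization $(D\widetilde{\Phi})_s=\widetilde{\Phi}_{\pi/2-s}$: one must carefully track the orientation convention in the definition of $\widetilde{\Phi}_t$ (which of the two distance-$|t|$ points on $GC_{\widetilde{\Phi}(\theta)}$ is selected, depending on the $N$-side component), and then confirm that as $s$ ranges over $(-\pi,\pi)$ the hypersurfaces $(D\widetilde{\Phi})_s$ coincide as an unordered family with the hypersurfaces $\widetilde{\Phi}_t$ for $t\in(-\pi,\pi)$, so that the unions in the two formulas of Proposition \ref{proposition wave front} truly describe the same sets.
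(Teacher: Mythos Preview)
Your approach is essentially identical to the paper's: establish the wave-front reparametrization $(D\widetilde{\Phi})_s=\widetilde{\Phi}_{\pi/2-s}$ and then invoke Proposition \ref{proposition wave front}. Two small points: $\widetilde{\Psi}_{\widehat{\gamma}}(S^n)$ is actually $\partial\widetilde{\mathcal{W}}_\delta=\partial\mathcal{D}\widetilde{\mathcal{W}}_\gamma$, not $\partial\widetilde{\mathcal{W}}_\gamma$ (harmless here since the statement is symmetric), and the paper first invokes Theorem \ref{theorem 1} and Proposition \ref{proposition 2} to guarantee that $D\widetilde{\Psi}_{\widehat{\gamma}}=\widetilde{\Psi}_{\widehat{\delta}}$ is a genuine $C^\infty$ embedding, which you implicitly assume when you call $D\widetilde{\Phi}$ an embedding.
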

\begin{proof}
By Theorem \ref{theorem 1}, it follows that $\delta$ 
is of class $C^\infty$ if 
$\gamma$ is of class $C^\infty$.     
Thus, under the assumption of Theorem \ref{theorem 2}, 
the mapping $\widetilde{\Psi}_{\widehat{\delta}}: S^n\to S^{n+1}_{N,+}$ 
defined as follows is a $C^\infty$ embedding. 
\[
\widetilde{\Psi}_{\widehat{\delta}}(\theta)=
\alpha_N^{-1}\circ Id\left(\theta, \widehat{\delta}(\theta)
\right) \quad (\theta\in S^n).   
\]   
By Proposition \ref{proposition 2}, the following holds.  
\[
\widetilde{\Psi}_{\widehat{\delta}}=
D\widetilde{\Psi}_{\widehat{\gamma}}.  
\]
By the definition of spherical wave fronts, the following holds:   
\[
\left(
\widetilde{\Psi}_{\widehat{\gamma}}
\right)_t 
= 
\left(
\widetilde{\Psi}_{\widehat{\delta}}
\right)_{\pi/2-t}.    
\]
Thus, by Proposition \ref{proposition wave front}, we have the following.  
\begin{eqnarray*}
Spherical\mbox{-}Caust\!\left(\widetilde{\Psi}_{\widehat{\gamma}}\right) 
& = & 
Spherical\mbox{-}Caust\!\left(D\widetilde{\Psi}_{\widehat{\gamma}}\right)  \\ 
Spherical\mbox{-}Sym\!\left(\widetilde{\Psi}_{\widehat{\gamma}}\right) 
& = & 
Spherical\mbox{-}Sym\!\left(D\widetilde{\Psi}_{\widehat{\gamma}}\right).   
\end{eqnarray*}
Therefore, Proposition \ref{proposition 1} follows.   
\end{proof} 


\par 
\medskip
%
Now we start to prove Theorem \ref{theorem 2}.  
Firstly, recall the definition of the caustic of $\partial\mathcal{W}_\gamma$.      
\[
Caust(\partial\mathcal{W}_\gamma)
=\left\{v\;\left|\;\exists\theta\in S^n;\nabla\delta_v(\theta)=0\,\, 
\mbox{\rm and}\,\,\det(\mbox{\rm Hess}(\delta_v))(\theta)=0\right.\right\}, 
\]
where $\delta_v=\dfrac{1}{2}
\left|\left|\left(\theta, {1}/{\delta(-\theta)}\right)-v\right|\right|^2$.    
Suppose that the origin is a point of $Caust(\partial\mathcal{W}_\gamma)$. 
Then, 
there exists a point $\theta\in S^n$ such that $\nabla\delta(\theta)=0$ 
and $\det(\mbox{\rm Hess}(\delta))(\theta)=0$.
By Remark \ref{remark 4.1}, 
it follows that $\nabla\widetilde{\delta}(\theta)=0$ 
and $\det(\mbox{\rm Hess}(\widetilde{\delta}))(\theta)=0$. 
Thus, 
by Lemma \ref{lemma 4.1}, it follows that the north pole 
$N$ is contained in the spherical caustic of 
$\partial \widetilde{\mathcal{W}}_\gamma$.     
Then, by Proposition \ref{proposition 1},  
it follows that $N$ is contained in 
the spherical caustic of 
$\partial \mathcal{D}\widetilde{\mathcal{W}}_\gamma$.    
Thus, there exist $\widetilde{\theta}\in S^n$ 
such that both $\nabla\widetilde{\gamma}(\widetilde{\theta})=0$ 
and $\det(\mbox{\rm Hess}(\widetilde{\gamma}))(\widetilde{\theta})=0$ 
hold.    
Hence, again by Remark \ref{remark 4.1}, 
the origin is contained in the caustic of $\partial\mathcal{DW}_\gamma$.
\par 
\smallskip 
Next, suppose that the origin is a point of 
$Sym(\partial\mathcal{W}_\gamma)-Caust(\partial\mathcal{W}_\gamma)$.
Then, by Lemma \ref{lemma 4.2}, the north pole $N$ is 
contained in the spherical symmetry set of 
$\partial \widetilde{\mathcal{W}}_\gamma$.    
Then, by Proposition \ref{proposition 1}, 
it follows that $N$ 
is contained in the spherical symmetry set of 
$\partial \mathcal{D}\widetilde{\mathcal{W}}_\gamma$. 
Hence, again by Lemma \ref{lemma 4.2}, the origin is contained in the 
symmetry set of $\partial\mathcal{DW}_\gamma$.
\hfill $\square$
\par 
\begin{remark}\label{remark 4.2}
As a by-product of the proof of Theorem \ref{theorem 2}, 
we have the following:   
\begin{theorem}\label{theorem 4}
\begin{enumerate}
\item[(1)]\quad 
Let $\gamma: S^n\to \mathbb{R}_+$ be a $C^\infty$ {\color{black}strictly} convex integrand 
having only non-degenerate critical points.   
Then, $\delta$ is a $C^\infty$ function 
having only non-degenerate critical points.       
\item[(2)]\quad 
Let $\gamma: S^n\to \mathbb{R}_+$ be a $C^\infty$ {\color{black}strictly} convex integrand 
having only non-degenerate critical points.      
Then, the restriction of $\gamma$ 
to the set consisting critical points of 
$\gamma$ is injective if and only if 
the restriction of ${\delta}:S^n\to \mathbb{R}_+$ 
to the set consisting of critical points of 
it is injective. 
\end{enumerate}
\end{theorem}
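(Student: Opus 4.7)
The plan is to extract both assertions from the chain of equivalences already assembled for the proof of Theorem \ref{theorem 2}, reading each link as a biconditional rather than a one-way implication. Every tool required---Remark \ref{remark 4.1}, Lemma \ref{lemma 4.1}, Lemma \ref{lemma 4.2}, and Proposition \ref{proposition 1}---is itself of ``if and only if'' type, and Corollary \ref{corollary 0} supplies the symmetry that lets $\gamma$ and $\delta$ exchange roles throughout the argument.

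For (1) I would argue by contrapositive. Assume $\delta$ has a degenerate critical point at some $\theta_0 \in S^n$. Remark \ref{remark 4.1} applied to $\delta$ (and the analogously defined $\widetilde{\delta}(\theta)=d(\widetilde{\Psi}_{\widehat{\delta}}(\theta),N)$) promotes $\theta_0$ to a degenerate critical point of $\widetilde{\delta}$. Lemma \ref{lemma 4.1}, with the roles of $\gamma$ and $\delta$ interchanged (legitimate since by Corollary \ref{corollary 0} the function $\delta$ is itself a $C^\infty$ strictly convex integrand with dual $\gamma$), then places $N$ in {\it Spherical-Caust}$(\widetilde{\Psi}_{\widehat{\delta}})=${\it Spherical-Caust}$(\partial\widetilde{\mathcal{W}}_\gamma)$. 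By Proposition \ref{proposition 1} this equals {\it Spherical-Caust}$(\partial\mathcal{D}\widetilde{\mathcal{W}}_\gamma)=${\it Spherical-Caust}$(\widetilde{\Psi}_{\widehat{\gamma}})$. Reversing Lemma \ref{lemma 4.1} and Remark \ref{remark 4.1} now produces a degenerate critical point of $\gamma$, contradicting the hypothesis.

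For (2), again by contrapositive, suppose $\delta$ has two distinct critical points $\theta_1 \ne \theta_2$ with $\delta(\theta_1)=\delta(\theta_2)$; by (1) both are non-degenerate. Translating into the Euclidean picture---where critical points of the squared distance from the origin to $\partial\mathcal{W}_\gamma$ correspond, via $\widehat{\delta}(\theta)=1/\delta(-\theta)$, to critical points of $\delta$---this places the origin in $Sym(\partial\mathcal{W}_\gamma)$ but, by (1), outside $Caust(\partial\mathcal{W}_\gamma)$. Applying Lemma \ref{lemma 4.2}, Proposition \ref{proposition 1}, and Lemma \ref{lemma 4.2} in succession transfers the origin into $Sym(\partial\mathcal{DW}_\gamma)$. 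Combining with the assumption that $\gamma$ has only non-degenerate critical points---so that, by (1), the origin also avoids $Caust(\partial\mathcal{DW}_\gamma)$---one extracts two distinct critical points of $\gamma$ sharing a common value, contradicting injectivity. The main bookkeeping point, rather than a genuine obstacle, is keeping the antipodal composition in $\widehat{\gamma}$ and $\widehat{\delta}$ aligned with the correspondence between critical points of $\gamma$, $\delta$ and of the distance-squared functions on the boundaries of the respective Wulff shapes; this is handled exactly as in the proof of Theorem \ref{theorem 2}.
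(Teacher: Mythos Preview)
Your proposal is correct and matches the paper's approach exactly: the paper presents Theorem \ref{theorem 4} simply as a ``by-product of the proof of Theorem \ref{theorem 2}'' with no separate argument, and what you have written is precisely that by-product spelled out---reading each of Remark \ref{remark 4.1}, Lemma \ref{lemma 4.1}, Lemma \ref{lemma 4.2}, and Proposition \ref{proposition 1} as a biconditional, together with the $\gamma\leftrightarrow\delta$ symmetry from Corollary \ref{corollary 0}. The only thing to make explicit is that the converse direction in (2) follows by the same symmetry, but you have already flagged this at the outset.
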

\end{remark}
\section{Proof of Theorem \ref{theorem 3}}\label{section 5}
We first show the assertion (1) of Theorem \ref{theorem 3}.    
The following two equalities have been given 
in Subsection \ref{subsection 2.7}.   
\[
\widehat{\delta}(h_\delta(\theta))h_\delta(\theta)
=\Phi_\delta(h_\delta(\theta))
=\gamma(\theta)\theta+\nabla\gamma(\theta).
\leqno{(5.1)}
\]
\[
\widehat{\gamma}({h}_\gamma(\theta)){h}_\gamma(\theta)
=\Phi_\gamma(h_\gamma(\theta))
=
\delta(\theta)\theta+\nabla\delta(\theta).
\leqno{(5.2)}
\]
By elementary geometry, 
for any $\theta\in S^n$ the following inequalities hold.   
\[
\dfrac{1}{\delta(-h_\delta(\theta))}\geq \gamma(\theta).
\leqno{(5.3)}
\]
\[
\dfrac{1}{\gamma(-{h}_\gamma(\theta))}\geq \delta(\theta).
\leqno{(5.4)}
\]
On the other hand, since $\Phi_\delta(S^n)$ and 
$\Phi_\gamma(S^n)$ are strictly locally convex, 
for any $\theta\in S^n$ the following equivalent inequalities hold.   
\[
\gamma(\theta)\geq \frac{1}{\delta(-\theta)}.
\leqno{(5.5)}
\]
\[
\delta(\theta)\geq \frac{1}{\gamma(-\theta)}.  
\leqno{(5.6)}
\]  
The above equalities (5.1) and (5.2) imply 
that $h_\delta(\theta_0)=\theta_0$ if and only if 
$\nabla \gamma(\theta_0)=0$, 
and ${h}_\gamma(\theta_0)=\theta_0$ if and only if 
$\nabla \delta(\theta_0)=0$.
It follows that $\theta_0\in S^n$ is 
a critical point of $\gamma$ if and only if 
the following equality holds for $\theta_0\in S^n$   
\[
\dfrac{1}{\delta(-\theta_0)}= \gamma(\theta_0), 
\leqno{(5.7)}
\]
and equivalently 
that $\theta_0\in S^n$ is a critical point of $\delta$ if and only if 
the following equality holds for $\theta_0\in S^n$   
\[
\dfrac{1}{\gamma(-\theta_0)}= \delta(\theta_0).
\leqno{(5.8)}
\]
Hence, the assertion (1) of Theorem \ref{theorem 3} follows.   
\par 
\medskip
Next, we show the assertion (2) of Theorem \ref{theorem 3}.    
\par 
We first show the \lq\lq only if\rq\rq \;part.   
Let $\theta_0\in S^n$ be a non-degenerate critical point of $\gamma$ 
with Morse index $i$.    
Then, there exists 
a coordinate neighborhood $(U, \varphi)$ of $\theta_0$ such that 
$\varphi(\theta_0)={\bf 0}$ and the following equality holds: 
\[
\gamma\circ \varphi^{-1}(x_1, \ldots, x_n) = 
\gamma(\theta_0)-x_1^2-\cdots -x_i^2+x_{i+1}^2+\cdots +x_n^2.   
\leqno{(5.9)}
\]
By Theorem \ref{theorem 2} 
and the assertion (1) of Theorem \ref{theorem 3}, $-\theta_0\in S^n$ 
is a non-degenerate critical point of $\delta$.     
Thus, there exist an integer $j$ $(0\le j\le n)$ and 
a coordinate neighborhood $(V, \psi)$ 
of $-\theta_0$ such that 
$\psi(-\theta_0)={\bf 0}$ 
and the following equality holds: 
\[
\delta\circ \psi^{-1}(x_1, \ldots, x_n) = 
\delta(-\theta_0)-x_1^2-\cdots -x_j^2+x_{j+1}^2+\cdots +x_n^2.    
\leqno{(5.10)}
\]
We show that $j=n-i$.    
By (5.7), it follows that 
\[
\gamma(\theta_0)\delta(-\theta_0)=1.
\leqno{(5.11)}
\]
Set $x=(x_1, \ldots, x_n)$ and 
\begin{eqnarray*}
U_1 & = & 
\left\{x\in \varphi(U)\; \left|\; 
x_1^2+\cdots +x_i^2\ge x_{i+1}^2+\cdots +x_n^2
\right.\right\},  \\ 
U_2  & = & \left\{x\in \varphi(U)\; \left|\; 
x_1^2+\cdots +x_i^2\le x_{i+1}^2+\cdots +x_n^2
\right.\right\}, \\ 
{V}_1  & = &  \left\{x\in \psi(V)\; \left|\; 
x_1^2+\cdots +x_j^2\ge x_{j+1}^2+\cdots +x_n^2
\right.\right\}, \\ 
V_2  & = &  \left\{x\in \psi(V)\; \left|\; 
x_1^2+\cdots +x_j^2\le x_{j+1}^2+\cdots +x_n^2
\right.\right\}. 
\end{eqnarray*}
For any $x\in U_2$, by (5.9), we have the following: 
\[
\gamma\circ \varphi^{-1}(x)\ge\gamma(\theta_0).   
\]
Hence, by (5.3), (5.10) and (5.11), 
we have the following for any $x\in U_2$:
{\color{black}
\[
\frac{1}{\delta \left( -h_{\delta}\circ\phi^{-1}(x)\right)}
\geq \gamma\circ  \phi^{-1}(x)
\geq \gamma(\theta_{0})
=\frac{1}{\delta(-\theta_{0})}.
\]
This implies
}
\[
\psi\left(-h_\delta\circ\varphi^{-1}(x)\right)\in V_1.    
\]
Since $x\mapsto 
\psi\left(-h_\delta\circ\varphi^{-1}(x) 
\right)$ 
is a $C^\infty$ diffeomorphism, 
it follows that $n-i\le j$.    
Notice that (5.3) can be replaced with (5.4) 
to obtain the same inequality 
$n-i\le j$.      
On the other hand, for any $x\in U_1$, 
by (5.9), we have the following: 
\[
\gamma\circ \varphi^{-1}(x)\le \gamma(\theta_0).   
\]
Hence, by (5.5), (5.10) and (5.11), 
we have the following for any $x\in U_1$:
{\color{black}
\[
\frac{1}{\delta(-\theta_{0})}=\gamma(\theta_{0})
\geq \gamma\circ \phi^{-1}(x)
\geq \frac{1}{\delta\left( -\phi(x) \right)}.
\]
This implies
}
\[
\psi\left(-\varphi^{-1}(x)\right)\in {V}_2.    
\]
Since $x\mapsto \psi\left(-\varphi^{-1}(x)
\right)$ 
is a $C^\infty$ diffeomorphism, 
it follows that $i\le n-j$.  
Therefore, we have $j=n-i$.  
\par 
The \lq\lq if\rq\rq\;part can be proved by the same method.       
Thus, the assertion (2) of Theorem \ref{theorem 3} follows.    
\hfill $\square$
\section*{Acknowledgement}
{\color{black} This work is partially supported 
by JSPS and CAPES 
under the Japan--Brazil research cooperative program and 
JSPS KAKENHI Grant Number 26610035, {\color{black}17K05245}.}

\end{document}